\documentclass[12pt]{amsart}
\usepackage[T1]{fontenc}
\usepackage{lmodern}
\usepackage[english]{babel}
\usepackage{amsmath}
\usepackage{amssymb}
\usepackage{amsxtra}
\usepackage{amscd}
\usepackage[margin=2cm]{geometry}
\usepackage{microtype}
\usepackage{csquotes}
\usepackage[
    backend=biber,
    style=numeric,
    sorting=nyt
]{biblatex}
\usepackage{xcolor}
\definecolor{linkblue}{HTML}{263F70}
\definecolor{citeviolet}{HTML}{563D70}
\definecolor{urlblue}{HTML}{345775}

\usepackage[
    colorlinks=true,
    linkcolor=linkblue,
    citecolor=citeviolet,
    urlcolor=urlblue,
    bookmarksnumbered=true,
    pdfpagemode=UseNone,
    final
]{hyperref}
\usepackage{tikz}
\newtheorem{theorem}{Theorem}[section]
\newtheorem{lemma}[theorem]{Lemma}
\newtheorem{proposition}[theorem]{Proposition}
\newtheorem{corollary}[theorem]{Corollary}
\newtheorem{maintheorem}{Theorem}

\theoremstyle{definition}
\newtheorem{definition}[theorem]{Definition}
\theoremstyle{remark}
\newtheorem{remark}[theorem]{Remark}
\newcommand{\mbb}{\mathbb}
\newcommand{\opn}{\operatorname}
\newcommand{\mcl}{\mathcal}
\newcommand{\Cx}{\mbb{C}}
\newcommand{\CG}{\mbb{C}[G]}
\DeclareMathOperator{\Der}{Der}
\DeclareMathOperator{\Inn}{Inn}
\DeclareMathOperator{\QInn}{QInn}
\title{Number of ends of conjugacy diagrams}
\author{Alexey V. Naianzin}
\email{naianzin.av@phystech.edu}
\author{Andronick A. Arutyunov}
\email{andronick.arutyunov@gmail.com}
\subjclass[2020]{Primary 20F65; Secondary 16W25, 16E40, 20E22, 20B07}
\keywords{conjugacy diagram, ends of graphs, Schreier graph, Houghton
groups, derivations of group algebras, Hochschild cohomology}
\date{}
\begin{document}
\begin{abstract}
For each $n\geqslant2$, we construct a finitely generated group $W_n$ with a
conjugacy class such that the conjugacy diagram of this class has exactly
$n$ ends. For this class, we compute the quotient spaces of the space of
derivations of the group algebra $\Cx[W_n]$ modulo quasi-inner and inner
derivations, as well as the corresponding component of the first Hochschild
cohomology.
\end{abstract}

\maketitle

\section*{Introduction}
Conjugacy diagrams were introduced in \cite{aru23,Arutyunov2023} in the study of derivations of group algebras. Let $G$ be a finitely generated group. The space of derivations of the group algebra $\CG$ decomposes over the conjugacy classes $[u]$ of $G$, and for each class we have the inclusions
\[
\Inn(\Gamma_{[u]})\subseteq\QInn(\Gamma_{[u]})\subseteq\Der(\Gamma_{[u]})
\]
of the spaces of inner, quasi-inner, and all derivations corresponding to the class $[u]$ (for the definitions, see Subsection~\ref{sec:der}). If the class $[u]$ is infinite, then the dimension of $\QInn(\Gamma_{[u]})/\Inn(\Gamma_{[u]})$ is one less than the number of ends of the corresponding component of the conjugacy diagram \cite[Corollary~2]{aru23}; see Theorem~\ref{thm:qinn} below. In previously known examples, the number of ends is $0$, $1$, $2$, or infinite. However, examples of conjugacy diagrams with a finite number of ends greater than $2$ were not known to the authors. The following result fills this gap.

\begin{maintheorem}\label{thm:A}
For each $n\geqslant2$, the wreath product $W_n=\mbb Z\wr_{X_n}\mcl G_n$ of
the group $\mbb Z$ and the Houghton group $\mcl G_n$ with respect to the
natural action of $\mcl G_n$ on the set $X_n$ of $n$ rays is finitely
generated, and the component of the conjugacy diagram of $W_n$ containing
the element $u=(e_{x_0},1)$ has exactly $n$ ends.
\end{maintheorem}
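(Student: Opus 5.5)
We need to recall the definition of the conjugacy diagram. In the Arutyunov setting, the vertices of the component for $[u]$ are pairs $(a,b)$ with $a^{-1}b\in[u]$ (or $ab^{-1}$ conjugate to $u$). Edges are given by multiplication by generators $s$ acting by $(a,b)\mapsto(sa,sb)$ and $(a,b)\mapsto(as,bs)$. Equivalently, the component is the Schreier graph of $G\times G$, acting on $G$ by $(g,h)\cdot x = gxh^{-1}$, restricted to one orbit. I would work with the Schreier-graph model: the vertex set of the component is identified with $G\times G/\mathrm{Stab}$, with each vertex recorded as a pair $(a,b)$ with $ab^{-1}=$ something. For $u$ the diagram vertices correspond to pairs $(g, g')$ with $g^{-1}ug' $... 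To stay concrete, I take the component to be the Schreier graph of $G\times G$ acting by two-sided multiplication on the set $\{(a,b): a b^{-1}\in[u]\}$, with respect to the finite generating set $S\times\{1\}\cup\{1\}\times S$.

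The plan has three steps. First, finite generation: $\mathcal G_n$ is finitely generated for $n\ge 2$ (by translations $g_i$ shifting ray $i$ into ray $1$, for $n\ge3$ these suffice; for $n=2$ add a transposition). Since $X_n$ is a single transitive orbit, $W_n$ is generated by these elements together with $e_{x_0}$. Second, identify the component up to quasi-isometry. The component is quasi-isometric to $G/C_G(u)$, where a vertex $(a,b)$ is mapped to $a\,C(u)$ together with a bounded "Cayley part". More precisely, the component maps onto the Cayley graph coset structure: an orbit point is $(a, u^{-1}a)$-type, and the fibers are copies of $C(u)$ acting diagonally. I would show that the component is quasi-isometric to the Schreier graph of $G$ acting on $[u]$ by conjugation; this is the standard reduction (compare Theorem~\ref{thm:qinn} and the discussion in \cite{aru23}). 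Third, compute $C_{W_n}(u)$ and the conjugacy class. For $(f,\sigma)$ conjugating $e_{x_0}$ we get $e_{\sigma(x_0)}$, since the base is abelian. So $[u]=\{e_x: x\in X_n\}\cong X_n$, and conjugation by a generator acts through the action of $\mathcal G_n$ on $X_n$. Hence the Schreier graph of the conjugation action is the Schreier graph of $\mathcal G_n$ on $X_n$. Up to finitely many extra edges, this is $n$ rays glued along a finite set, so it has $n$ ends.

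The main obstacle is step two. The conjugacy diagram is not literally the conjugation Schreier graph, since vertices record $(a,b)$, not just $aub^{-1}$. I would show directly that the map $(a,b)\mapsto a^{-1}b$ (or the corresponding class element) from the component to $[u]$ has connected fibers of controlled shape, and that it is a quasi-isometry or at least preserves the number of ends. If the fibers are infinite (for example, copies of $C(u)\supseteq$ the base group), a quasi-isometry fails. In that case I would instead use the precise definition of the diagram from Subsection~\ref{sec:der}. There, presumably, vertices are elements of $[u]$ and edges come from left and right multiplication. One then checks that $e_x\mapsto s e_x s^{-1}$ and $e_x\mapsto e_x s$-type edges stay within bounded distance in the ray structure of $X_n$. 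Finally, I would verify that removing a finite ball leaves exactly $n$ infinite components, one per ray, using that generators move points only a bounded distance along each ray.
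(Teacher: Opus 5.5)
The gap is in how you set up the object. In this paper (Definition~\ref{def:conj-graph}) the conjugacy diagram has vertex set $G$ itself, and $g$ is joined to $s^{-1}gs$ for each generator $s$. There are no vertices of the form $(a,b)$ and no edges coming from left or right multiplication; the pairs $(g,h)$ you have in mind are the morphisms of the groupoid $\Gamma$, not vertices of the diagram. Thus $\operatorname{conj}(G)_{[u]}$ has vertex set exactly $[u]$, and by definition it is the Schreier graph of the conjugation action on $[u]$, i.e.\ $\operatorname{Sch}(G,C_G(u),S)$ by Proposition~\ref{prop:conj-schreier}.

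Your proposal never commits to this definition, and the reduction you call the main obstacle cannot be carried out as stated. In your pair model, the fibres of the map to $[u]$ are infinite, as you anticipate. So the map is not a quasi-isometry, since in a locally finite graph a quasi-isometry has finite fibres. Nothing in your plan shows it preserves ends either; projections with infinite fibres typically do not, e.g.\ $\mathbb Z^2\to\mathbb Z$. Your fallback also still includes edges of the type $e_x\mapsto e_xs$, which leave $[u]$ altogether. So step two is, as written, both false and unnecessary.

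With the correct definition, your step three is already the whole proof, and it is the paper's argument. Because $A_n$ is abelian, conjugation by $(a,\sigma)$ sends $u_x=(e_x,1)$ to $u_{x\cdot\sigma}$ regardless of $a$. Hence $x\mapsto u_x$ identifies $[u]$ with $X_n$. The generators $g_i$ (and $\beta$ when $n=2$) then give exactly the edges of $Y_n$, and the generator $u$ gives a loop at every vertex. So the extra edges are not finitely many, as you write, but they are all loops and do not affect ends.

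The paper packages the same computation through Proposition~\ref{prop:centralizer} and the bijection $\Psi$ from $(A_n\rtimes H_n)\backslash W_n$ onto $H_n\backslash\mathcal G_n\cong X_n$. Your direct identification of $[u]$ with $X_n$ is equivalent. For the final count, the supremum in Definition~\ref{def:ends} also needs an upper bound, not just one good finite set. For any finite $K$, every infinite component of $Y_n\setminus K$ contains the tail of some ray beyond $K$, and each such tail is connected, so there are at most $n$ infinite components.
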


Precise definitions are given in Subsections~\ref{sec:houghton}
and~\ref{sec:Wn-ends}. In particular, the dimension of
$\QInn(\Gamma_{[u]})/\Inn(\Gamma_{[u]})$ can take any finite value
(Corollary~\ref{cor:anydim}).

The applications to derivations and Hochschild cohomology are structured
as follows. The first Hochschild cohomology decomposes into a direct sum
over the conjugacy classes:
\[
\opn{HH}^1(\CG,\CG)\cong\bigoplus_{[u]}\opn{HH}^1_{[u]}(\CG,\CG),
\qquad
\opn{HH}^1_{[u]}(\CG,\CG)\cong\Der(\Gamma_{[u]})/\Inn(\Gamma_{[u]})
\]
(Subsection~\ref{sec:hochschild}), and the restriction of characters to the
loops at the object $u$ yields the exact sequence
\[
0\longrightarrow\QInn(\Gamma_{[u]})/\Inn(\Gamma_{[u]})
\longrightarrow\opn{HH}^1_{[u]}(\CG,\CG)
\longrightarrow\opn{Adm}_u(G)\longrightarrow0,
\]
where $\opn{Adm}_u(G)\subseteq\opn{Hom}(C_G(u),\Cx)$ is the space of
admissible characters of the centralizer
(Subsection~\ref{subsec:admissible}).
If $[u]$ is infinite, then the dimension of the first term equals
$e(\opn{conj}(G)_{[u]})-1$; thus this dimension is determined by the number
of ends of the component of the conjugacy diagram. To compute the last
term, one must determine which characters of the centralizer $C_G(u)$
extend to locally finite characters of the groupoid $\Gamma_{[u]}$. This
sequence is a fragment of the long exact cohomology sequence of the group
$G$ with coefficients in the permutation module $\opn{span}_{\Cx}[u]$
(Remark~\ref{rem:LES}). For the group $W_n$, both outer terms are computed
explicitly.

\begin{maintheorem}\label{thm:B}
For the class $[u]$ from Theorem~\ref{thm:A}, we have
\[
\dim\bigl(\QInn(\Gamma_{[u]})/\Inn(\Gamma_{[u]})\bigr)=n-1,
\qquad
\dim\bigl(\Der(\Gamma_{[u]})/\QInn(\Gamma_{[u]})\bigr)=
\begin{cases}
2,&n=2,\\
1,&n\geqslant3,
\end{cases}
\]
and, consequently,
\[
\dim\opn{HH}^1_{[u]}\bigl(\Cx[W_n],\Cx[W_n]\bigr)=
\begin{cases}
3,&n=2,\\
n,&n\geqslant3.
\end{cases}
\]
\end{maintheorem}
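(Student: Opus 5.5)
The first equality is immediate from Theorem~\ref{thm:A} and Theorem~\ref{thm:qinn}. The class $[u]$ is infinite: it consists of the elements $(e_{x},1)$ for $x$ in the orbit of $x_0$, which is all of $X_n$ because $\mcl G_n$ acts transitively. Hence $\dim\QInn/\Inn=e(\opn{conj}(W_n)_{[u]})-1=n-1$. The formula for $\opn{HH}^1_{[u]}$ then follows from the exact sequence displayed in the introduction, once we show $\dim\opn{Adm}_u(W_n)=2$ for $n=2$ and $1$ for $n\geqslant3$. So the whole work is computing the admissible characters of the centralizer.

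\emph{Step 1: the centralizer.} We compute $C_{W_n}(u)$. Write $W_n=\bigoplus_{X_n}\mbb Z\rtimes\mcl G_n$. An element $(f,g)$ commutes with $(e_{x_0},1)$ iff $g$ fixes $x_0$, with no condition on $f$ since the base group is abelian. So
\[
C(u)=\Bigl(\bigoplus_{X_n}\mbb Z\Bigr)\rtimes\opn{Stab}_{\mcl G_n}(x_0).
\]
The point stabilizer is itself Houghton-like: it is an increasing union of conjugates of $\mcl G_n$ with one point removed from a ray.

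\emph{Step 2: $\opn{Hom}(C(u),\Cx)$.} On the base group, a character factors through the coinvariants of the $\opn{Stab}(x_0)$-action. The orbits on $X_n$ are $\{x_0\}$ and $X_n\setminus\{x_0\}$, so this part is spanned by $f\mapsto f(x_0)$ and $f\mapsto\sum_{x\ne x_0}f(x)$. The second equals the total sum minus $f(x_0)$.

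On the stabilizer, we use $\mcl G_n^{ab}\cong\mbb Z^{n-1}$ for $n\geqslant3$, given by the translation lengths $t_1,\dots,t_n$ with $\sum t_i=0$. For $n=2$ the abelianization is $\mbb Z\oplus\mbb Z/2$. The stabilizer contains the finitary symmetric group on $X_n\setminus\{x_0\}$ and has the same translation-length characters, so its abelianization is similar.

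\emph{Step 3: admissibility.} This is the main obstacle. A character of $C(u)$ is admissible iff it extends to a locally finite character of the groupoid $\Gamma_{[u]}$, equivalently to a $1$-cocycle on $W_n$ with values in the permutation module $\opn{span}_\Cx X_n$ (Remark~\ref{rem:LES}). Concretely we need a function $\chi(x,w)$ on pairs with $w\cdot x$ defined, additive along composition, restricting to the given character at $x_0$, and such that for each generator $w$ only finitely many $x$ have $\chi(x,w)\ne0$.

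\begin{itemize}
\item The character $f\mapsto f(x_0)$ extends as $\chi(x,(f,g))=f(x)$, which is finitely supported. So it is admissible.
\item The total-sum character is a restriction of a global character of $W_n$. Through the exact sequence it lands in the image of $\QInn$, i.e.\ it dies in $\opn{Adm}$, or is accounted for there.
\item A translation-length character $t_i$ would need $\chi(x,g)\ne0$ for $x$ along entire rays, violating local finiteness. The exception is combinations that can be rewritten as coboundaries using the ends.
\end{itemize}

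This reduces the count to one generic admissible class, namely $f(x_0)$. For $n=2$ there is one extra class: a character coming from the $\mbb Z/2$, or from the single independent translation, whose cocycle can be made finitely supported by orienting the line. This gives the extra dimension.

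I expect this case analysis to be delicate. The step to verify most carefully is which combinations of translation and sum characters are admissible modulo those already accounted for by $\QInn/\Inn$. I would test this directly on the generators of $\mcl G_n$ (the ray translations $g_i$) using explicit finite-support cocycles.
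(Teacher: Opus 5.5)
Your reduction is right. The first equality follows from Theorems~\ref{thm:A} and~\ref{thm:qinn}, since the class is infinite by transitivity. By Proposition~\ref{prop:admissible}, the rest is equivalent to $\dim\opn{Adm}_u(W_n)=2$ for $n=2$ and $1$ for $n\geqslant3$. But that computation, your Step~3, is the whole substance of the theorem, and you do not carry it out. The heuristics you give in its place are either wrong or would prove too much.

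On the base group, saying the total-sum character ``lands in the image of $\QInn$'' is not meaningful, because quasi-inner derivations restrict to zero on loops. What must be shown is that $a\mapsto\sum_{x\ne x_0}a(x)$ is \emph{not} admissible. Here is the missing argument. Since $A_n$ commutes with $[u]$, Proposition~\ref{prop:restriction-loops}(2) gives $B_{e_{f(y)}}(u_{f(x)})=B_{e_y}(u_x)$ for every finitary permutation $f\in F$. As $F$ is transitive on pairs $(x,y)$ with $x\neq y$, the off-diagonal value is a single constant. Local finiteness for the fixed label $e_y$ then forces it to be $0$. Hence $B_a(u)=\theta\,a(x_0)$ with $\theta=B_{e_{x_0}}(u)$, for all $a\in A_n$.

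On the stabilizer, the claim that a shift character ``would need $\chi\neq0$ along entire rays'' cannot be right as stated, since for $n=2$ the character $t_1$ \emph{is} admissible. Your ``exception'' clause leaves exactly the difference between $n=2$ and $n\geqslant3$ unexplained. The key idea you are missing is Lemma~\ref{lem:houghton-chars}. Suppose $\sigma\in H_n$ fixes infinitely many points. Local finiteness yields a fixed point $x\neq x_0$ with $B_\sigma(u_x)=0$. Conjugating by the transposition of $x$ and $x_0$, which commutes with $\sigma$, gives $B_\sigma(u)=0$.

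For $n\geqslant3$, let $\beta_i$ be the transposition of $x_0$ and $(i+1,1)$, and put $h_i=\beta_ig_i\in H_n$. These fix the other $n-2\geqslant1$ rays pointwise, and their shift vectors form a basis of $\{k\in\mbb Z^n:\sum k_j=0\}$. So the $H_n$-part of any admissible character vanishes. For $n=2$, every $\sigma\in H_2$ with $t_1(\sigma)\neq0$ has only finitely many fixed points, so the argument gives nothing.

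The lower bound for $n=2$ then needs an explicit locally finite extension of $\tau$. For example, on $\varphi\colon u_x\to u_{\sigma(x)}$ put $\chi_\tau(\varphi)=p(\sigma(x))-p(x)+t_1(\sigma)$, where $p(1,m)=-m$ and $p(2,m)=m$. Local finiteness follows from $p(\sigma(i,m))-p(i,m)=-t_1(\sigma)$ for large $m$ on both rays. ``Orienting the line'' points toward this, but it is not a construction. Your alternative source, the $\mbb Z/2$ in $\mcl G_2^{\mathrm{ab}}$, contributes nothing, because $\opn{Hom}(\mbb Z/2,\Cx)=0$.
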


Theorem~\ref{thm:A} follows from Proposition~\ref{prop:Wn-fg} and
Theorem~\ref{thm:Wn-ends}; Theorem~\ref{thm:B} follows from
Theorems~\ref{thm:Wn-ends}, \ref{thm:qinn}, and~\ref{thm:Wn-admissible}
together with Corollary~\ref{cor:Wn-HH-component}.

The conjugacy diagram of the class $[u]$ is isomorphic to the Schreier
graph of the pair $(G,C_G(u))$ (Proposition~\ref{prop:conj-schreier}), and
the number of ends of this diagram equals the number of ends of this pair
in the sense of \cite{scott1977ends}.
Pairs with an arbitrary finite number of ends are known: Houghton groups
provide such pairs \cite[Example~2.1]{scott1977ends}; see
also~\cite[\S~6]{azu24}.
Theorem~\ref{thm:A} gives such pairs in which the subgroup is the
centralizer of an element: in $W_n$, the centralizer of $u$ equals
$A_n\rtimes H_n$, and the stabilizer $H_n$ lifts to a centralizer
(Proposition~\ref{prop:centralizer}). Thus, for each $n\geqslant2$,
Theorem~\ref{thm:A} gives an example of a pair with $n$ ends consisting of
a group and the centralizer of an element of this group.
The number of ends of pairs and related invariants were recently studied
in~\cite{genevois2026}; subgroups with narrow Schreier graphs are considered
in~\cite{azu24}. The first Hochschild cohomology of group algebras of finite
groups is actively studied in the modular case (see, e.g.,
\cite{ABRyD2026}).

This paper consists of two sections. In Section~\ref{sec:ends}, we compute
the number of ends: Subsection~\ref{sec:conj-schreier} recalls the relation
between conjugacy diagrams and Schreier graphs,
Subsection~\ref{sec:houghton} introduces Houghton groups, and in
Subsection~\ref{sec:Wn-ends} we construct the group $W_n$. In
Section~\ref{sec:applications}, we describe the components of the space of
derivations (Subsection~\ref{sec:der}) and admissible characters
(Subsection~\ref{subsec:admissible}); admissible characters of $W_n$ are
computed in Subsection~\ref{subsec:Wn-der}. In
Subsection~\ref{sec:hochschild}, these results are stated in terms of
Hochschild cohomology.

\section{Computing the number of ends}\label{sec:ends}

\subsection{Conjugacy diagrams and Schreier graphs}\label{sec:conj-schreier}
\begin{definition}\label{def:conj-graph}
Suppose $G$ is a group generated by a finite set $S$.
The \emph{conjugacy diagram} $\opn{conj}(G)$ is the undirected graph with
vertex set $G$ in which, for all $g\in G$ and $s\in S$, the vertices $g$
and $s^{-1}gs$ are joined by an edge with label $s$; loops are allowed.
The connected component containing $u$ is denoted by
$\opn{conj}(G)_{[u]}$ and is called the
\emph{conjugacy diagram of the class} $[u]$.
\end{definition}

In \cite[Definition~1]{aru23}, the diagram $\opn{sk}(G;\lambda)$ is
associated with the conjugation action $\lambda(h,x)=hxh^{-1}$. Our diagram
$\opn{conj}(G)$ is obtained from $\opn{sk}(G;\lambda)$ by forgetting the
orientation and identifying edges with the same endpoints and labels.

\begin{definition}\label{def:ends}
Let $Y$ be a connected locally finite graph.
The \emph{number of ends} of $Y$ is the supremum of the number of infinite
connected components of $Y\setminus K$ over all finite sets $K$ of
vertices. We denote this number by
$e(Y)\in\mbb Z_{\geqslant0}\cup\{\infty\}$.
\end{definition}

\begin{definition}\label{def:schreier}
Suppose $G$ acts on a set $X$ from the right; we denote the action by
$x\cdot g$. The \emph{Schreier graph} of this action with respect to $S$
is the undirected graph with vertex set $X$ in which $x$ and $x\cdot s$
are joined by an edge with label $s$ for all $x\in X$ and $s\in S$.

If $H\leqslant G$, then the Schreier graph of the natural right action of
$G$ on the set of cosets $H\backslash G$ is denoted by
$\opn{Sch}(G,H,S)$. The number of ends of the pair $(G,H)$ is the number of
ends of $\opn{Sch}(G,H,S)$; this number does not depend on the choice of
the finite generating set \cite[Lemma~1.1]{scott1977ends}.
\end{definition}

We use the convention $u^g=g^{-1}ug$.
\begin{proposition}\label{prop:conj-schreier}
For each $u\in G$, the map
\[
u^g\longmapsto C_G(u)g
\]
defines an isomorphism
\[
\opn{conj}(G)_{[u]}\cong
\opn{Sch}\bigl(G,C_G(u),S\bigr).
\]
\end{proposition}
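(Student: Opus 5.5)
We will check three things in turn: that the map $u^g\mapsto C_G(u)g$ is well defined and bijective onto $C_G(u)\backslash G$, that it matches the edges of the two graphs (labels included), and that its domain is exactly the vertex set of $\opn{conj}(G)_{[u]}$. The first two are the usual orbit--stabilizer argument; the third is the only point needing a little care.

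\emph{Vertex sets.} Since $S$ generates $G$, every $g\in G$ is a word in $S\cup S^{-1}$. With $u^g=g^{-1}ug$ we have $(u^{g})^{s}=u^{gs}$, so conjugating step by step along the word moves from $u$ to $u^g$ along edges of $\opn{conj}(G)$; an edge labelled $s$ is traversed backwards to realize $s^{-1}$, which is allowed because the diagram is undirected. Hence the whole class $[u]$ lies in the component of $u$. Conversely, the neighbours of a vertex $x$ are the elements $s^{-1}xs$ and $sxs^{-1}$ with $s\in S$, and these lie in $[x]$. So every component is contained in a single conjugacy class, and the vertex set of $\opn{conj}(G)_{[u]}$ is exactly $[u]$.

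\emph{The bijection.} We have $u^g=u^h$ if and only if $hg^{-1}\in C_G(u)$, which holds if and only if $C_G(u)g=C_G(u)h$. This single equivalence gives both well-definedness and injectivity of $\phi\colon u^g\mapsto C_G(u)g$. Surjectivity is clear.

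\emph{Edges.} The edges of $\opn{conj}(G)_{[u]}$ with label $s$ join $x$ and $s^{-1}xs$ for $x\in[u]$. Writing $x=u^g$, such an edge joins $u^g$ and $u^{gs}$. Under $\phi$ it goes to the edge joining $C_G(u)g$ and $C_G(u)g\cdot s=C_G(u)gs$, which is exactly the edge with label $s$ in $\opn{Sch}(G,C_G(u),S)$. Since $\phi$ is a bijection on vertices, it induces a bijection between labelled edges. Loops correspond to loops: $u^{gs}=u^g$ exactly when $C_G(u)gs=C_G(u)g$.

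\emph{Edge multiplicities.} The one subtlety is how multiple edges and loops are counted. In both definitions an edge is determined by its unordered endpoint pair together with its label. So the induced map on edges is well defined and bijective, and $\phi$ is an isomorphism of labelled graphs. This bookkeeping is the only place where care is needed; the rest is direct.
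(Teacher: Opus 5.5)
Your proof is correct and is essentially the paper's argument: the equivalence $u^g=u^h\iff hg^{-1}\in C_G(u)\iff C_G(u)g=C_G(u)h$ gives well-definedness and bijectivity, and the identity $(u^g)^s=u^{gs}$ matches the $s$-edges of the two graphs. Your extra checks, that the vertex set of $\opn{conj}(G)_{[u]}$ is exactly $[u]$ and that edge multiplicities correspond, make explicit points the paper leaves implicit.
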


\begin{proof}
The equality $u^g=u^h$ is equivalent to $hg^{-1}\in C_G(u)$, i.e., to
$C_G(u)g=C_G(u)h$. Therefore the map is well defined and bijective. In
addition, $(u^g)^s=u^{gs}$, while the edge with label $s$ takes
$C_G(u)g$ to $C_G(u)gs$.
\end{proof}

\subsection{Houghton groups}\label{sec:houghton}
Put $\mbb N=\{1,2,\ldots\}$ and
$X_n=\{1,\ldots,n\}\times\mbb N$.
\begin{definition}\label{def:houghton}
The \emph{Houghton group} $\mcl G_n$ is the group of all bijections
$\sigma\colon X_n\to X_n$ such that there exist a finite set
$K\subseteq X_n$ and numbers $t_1(\sigma),\ldots,t_n(\sigma)\in\mbb Z$
satisfying
\[
\sigma(i,m)=(i,m+t_i(\sigma))
\]
for all $(i,m)\notin K$. The number $t_i(\sigma)$ is called the shift of
$\sigma$ on the $i$th ray. Since $\sigma$ is a bijection, we have
$\sum_{i=1}^n t_i(\sigma)=0$.
\end{definition}

These groups were constructed by Houghton and used
in~\cite[Example~2.1]{scott1977ends}; later they were described
in~\cite{houghton1978}.

The group $\mcl G_1$ consists of the finitary permutations (i.e.,
permutations with finite support) of $\mbb N$ and is not finitely
generated. For $n\geqslant2$, the group $\mcl G_n$ is finitely generated.
For $1\leqslant i\leqslant n-1$, put
\[
g_i(j,m)=
\begin{cases}
(j,m-1),&j=1,\ m>1,\\
(i+1,1),&j=1,\ m=1,\\
(j,m+1),&j=i+1,\\
(j,m),&\text{otherwise}.
\end{cases}
\]
For $n\geqslant3$, the group $\mcl G_n$ is generated by the set
$S_n=\{g_1,\ldots,g_{n-1}\}$ \cite[Lemma~2.7]{lee2012}. For $n=2$, let
$\beta\colon X_2\to X_2$ be the transposition interchanging $(1,1)$ and
$(2,1)$ and fixing all other points. Then $S_2=\{g_1,\beta\}$ generates
$\mcl G_2$. The group $\mcl G_n$ is of type $\mathrm{FP}_{n-1}$ but not of
type $\mathrm{FP}_n$ \cite{brown1987}. In describing Houghton groups and
their action, we follow the notation of \cite[\S~6]{azu24}; further
details can also be found there.

Consider the right action of $\mcl G_n$ on $X_n$ given by
$x\cdot\sigma=\sigma^{-1}(x)$. Put $x_0=(1,1)$ and
$H_n=\opn{Stab}_{\mcl G_n}(x_0)$. The graph
\[
Y_n=\opn{Sch}(\mcl G_n,H_n,S_n)
\]
has exactly $n$ ends. Indeed, the map $H_n\sigma\mapsto x_0\cdot\sigma$
identifies $H_n\backslash\mcl G_n$ with $X_n$; under this identification,
the generator $g_i$ joins adjacent points of the ray $\{1\}\times\mbb N$,
joins $x_0$ to $(i+1,1)$, joins adjacent points of the ray
$\{i+1\}\times\mbb N$, and gives loops at all other vertices (for $n=2$,
the generator $\beta$ adds the edge $\{x_0,(2,1)\}$ and loops). Removing
the vertex $x_0$, we obtain $n$ components; after removing loops and
identifying parallel edges, each of these components is a ray. Therefore
$e(Y_n)=n$.

Figure~\ref{fig:schreier} shows the cases $n=2$ and $n=4$. For $n=2$, the
vertices $x_0$ and $(2,1)$ are joined by two edges, coming from $g_1$ and
from $\beta$. For $n=4$, adjacent vertices of the first ray are joined by
three parallel edges, and adjacent vertices of each of the other rays are
joined by one edge. Dashed lines indicate the continuation of the rays.
Loops coming from generators that fix vertices are not shown, since they
do not affect the number of ends. Outside the finite region $D$, there
remain $n$ infinite components.

Moreover, $H_n$ is isomorphic to $\mcl G_n$, since the set
$X_n\setminus\{x_0\}$ is also a disjoint union of $n$ rays.
\begin{figure}[ht]
\centering
\tikzset{
  vertex/.style={circle,fill,inner sep=1.3pt},
  edge/.style={thick},
  tail/.style={thick,dashed},
  lbl/.style={font=\footnotesize,inner sep=2pt}
}
\begin{tikzpicture}
\draw[gray,dashed] (0,0) circle (0.5cm);
\node[gray,lbl] at (215:0.78) {$D$};
\node[vertex] (a0) at (0,0) {};
\node[vertex] (a1) at (-1.5,0) {};
\node[vertex] (a2) at (-3,0) {};
\node[vertex] (a3) at (-4.5,0) {};
\node[vertex] (b1) at (2,0) {};
\node[vertex] (b2) at (3.5,0) {};
\node[vertex] (b3) at (5,0) {};
\draw[edge] (a3) -- (a2) -- (a1) -- (a0);
\draw[tail] (a3) -- ++(-1.2,0);
\draw[edge] (a0) to[out=35,in=145] (b1);
\draw[edge] (a0) to[out=-35,in=-145] (b1);
\draw[edge] (b1) -- (b2) -- (b3);
\draw[tail] (b3) -- ++(1.2,0);
\node[lbl,anchor=south east] at ([xshift=-1pt,yshift=1pt]a0) {$x_0$};
\node[lbl,anchor=north west] at ([xshift=1pt,yshift=-1pt]b1) {$(2,1)$};
\node[lbl,anchor=south] at (-2.25,0.05) {$g_1$};
\node[lbl,anchor=south] at (1,0.36) {$g_1$};
\node[lbl,anchor=north] at (1,-0.36) {$\beta$};
\node[lbl,anchor=south] at (2.75,0.05) {$g_1$};
\node[lbl,anchor=north] at (-5.3,-0.12) {$\{1\}\times\mbb N$};
\node[lbl,anchor=north] at (5.8,-0.12) {$\{2\}\times\mbb N$};
\end{tikzpicture}\\[2pt]
{\small (a) $Y_2$}\\[12pt]
\begin{tikzpicture}
\draw[gray,dashed] (0,0) circle (2.15cm);
\node[gray,lbl] at (235:2.45) {$D$};
\node[vertex] (t0) at (0,0) {};
\node[vertex] (t1) at (-1.5,0) {};
\node[vertex] (t2) at (-3,0) {};
\node[vertex] (t3) at (-4.5,0) {};
\draw[edge] (t0) -- (t1);
\draw[edge] (t0) to[out=160,in=20] (t1);
\draw[edge] (t0) to[out=200,in=-20] (t1);
\draw[edge] (t1) -- (t2);
\draw[edge] (t1) to[out=160,in=20] (t2);
\draw[edge] (t1) to[out=200,in=-20] (t2);
\draw[edge] (t2) -- (t3);
\draw[edge] (t2) to[out=160,in=20] (t3);
\draw[edge] (t2) to[out=200,in=-20] (t3);
\draw[tail] (t3) -- ++(-1.2,0);
\node[lbl,anchor=south east] at ([xshift=-3pt,yshift=2pt]t0)
  {$x_0=(1,1)$};
\node[lbl,anchor=south] at (-3.75,0.35) {$g_1,g_2,g_3$};
\node[lbl,anchor=north] at (-5.3,-0.12)
  {$\{1\}\times\mbb N$};
\node[vertex] (r21) at (65:1.5) {};
\node[vertex] (r22) at (65:3.0) {};
\node[vertex] (r23) at (65:4.5) {};
\draw[edge] (t0) -- (r21) -- (r22) -- (r23);
\draw[tail] (r23) -- (65:5.6);
\node[lbl,anchor=east] at (65:0.95) {$g_1$};
\node[lbl,anchor=south west] at (65:5.75)
  {$\{2\}\times\mbb N$};
\node[vertex] (r31) at (22:1.5) {};
\node[vertex] (r32) at (22:3.0) {};
\node[vertex] (r33) at (22:4.5) {};
\draw[edge] (t0) -- (r31) -- (r32) -- (r33);
\draw[tail] (r33) -- (22:5.6);
\node[lbl,anchor=south] at (22:0.95) {$g_2$};
\node[lbl,anchor=west] at (22:5.75)
  {$\{3\}\times\mbb N$};
\node[vertex] (r41) at (-65:1.5) {};
\node[vertex] (r42) at (-65:3.0) {};
\node[vertex] (r43) at (-65:4.5) {};
\draw[edge] (t0) -- (r41) -- (r42) -- (r43);
\draw[tail] (r43) -- (-65:5.6);
\node[lbl,anchor=east] at (-65:0.95) {$g_3$};
\node[lbl,anchor=north west] at (-65:5.75)
  {$\{4\}\times\mbb N$};
\end{tikzpicture}\\[2pt]
{\small (b) $Y_4$}
\caption{The Schreier graphs $Y_2$ and $Y_4$.}
\label{fig:schreier}
\end{figure}

\subsection{Conjugacy diagram with \texorpdfstring{$n$}{n} ends}\label{sec:Wn-ends}
We fix the following definition of wreath products
(see~\cite{Bhattacharjee1998}).
\begin{definition}\label{def:wreath-product}
Suppose a group $Q$ acts on a set $X$ from the right and $A$ is a group.
Denote by $A^{(X)}$ the group of functions $a\colon X\to A$ with finite
support $\opn{supp}(a)=\{x:a(x)\ne1_A\}$ and pointwise multiplication. The
right action of $Q$ on $A^{(X)}$ is given by
\[
(a\cdot\sigma)(x)=a(x\cdot\sigma^{-1}).
\]

The \emph{restricted permutational wreath product} of $A$ and $Q$ with
respect to $X$ is the group
\[
A\wr_XQ:=A^{(X)}\rtimes Q
\]
with multiplication $(a,\sigma)(b,\rho)=((a\cdot\rho)\,b,\sigma\rho)$. The
identity element is $(1,1)$, and
$(a,\sigma)^{-1}=(a^{-1}\cdot\sigma^{-1},\sigma^{-1})$. If $X=Q$ and $Q$
acts on itself by right translations, we obtain the ordinary restricted
wreath product $A\wr Q$.
\end{definition}

If $A$ is written additively, then its identity element is denoted by $0$,
and the multiplication formula takes the form
\begin{equation}\label{eq:wreath-product}
(a,\sigma)(b,\rho)=(a\cdot\rho+b,\sigma\rho).
\end{equation}

Let $n\geqslant2$. Put
\[
A_n=\mbb Z^{(X_n)}=\bigoplus_{x\in X_n}\mbb Z e_x,
\qquad
W_n=\mbb Z\wr_{X_n}\mcl G_n=A_n\rtimes\mcl G_n
\]
and $u=(e_{x_0},1)\in W_n$. In what follows, we identify $A_n$ with the
subgroup $\{(a,1):a\in A_n\}$ and $\mcl G_n$ with the subgroup
$\{(0,\sigma):\sigma\in\mcl G_n\}$.
\begin{proposition}\label{prop:Wn-fg}
The group $W_n$ is generated by the finite set $T_n=S_n\cup\{u\}$.
\end{proposition}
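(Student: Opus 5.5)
Let $N=\langle T_n\rangle\leqslant W_n$. Since $S_n$ generates $\mcl G_n$ (by \cite[Lemma~2.7]{lee2012} for $n\geqslant3$, and by the stated choice of $S_2$ for $n=2$), we have $\mcl G_n\subseteq N$. As $W_n=A_n\rtimes\mcl G_n$, it then suffices to show $A_n\subseteq N$. Since $A_n$ is free abelian on the $e_x$, this reduces to showing $(e_x,0\text{-shift})=(e_x,1)\in N$ for every $x\in X_n$.

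The key computation is conjugation of $u$ by elements of $\mcl G_n$. By \eqref{eq:wreath-product} and the formula for inverses,
\[
(0,\sigma)^{-1}(e_{x_0},1)(0,\sigma)=(0,\sigma^{-1})(e_{x_0}\cdot\sigma,\sigma)=(e_{x_0}\cdot\sigma,1).
\]
From the definition $(a\cdot\sigma)(x)=a(x\cdot\sigma^{-1})$ and $x\cdot\sigma^{-1}=\sigma(x)$, we get $e_{x_0}\cdot\sigma=e_{y}$ with $\sigma(y)=x_0$, that is, $y=\sigma^{-1}(x_0)=x_0\cdot\sigma$. In any case $e_{x_0}\cdot\sigma$ is a single basis vector $e_{y}$, with $y$ running over the $\mcl G_n$-orbit of $x_0$. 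I will double-check the sign and direction conventions here, but they do not affect the argument.

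The remaining point is that $\mcl G_n$ acts transitively on $X_n$. This is already implicit in the identification $H_n\backslash\mcl G_n\cong X_n$ via $H_n\sigma\mapsto x_0\cdot\sigma$ used for $Y_n$. It can also be seen directly: powers of $g_1$ move $x_0$ along the first ray and, through $(2,1)$, along the second ray. Similarly, $g_i$ reaches the $(i+1)$st ray, since $g_i^{-1}$ sends $(i+1,1)$ to $x_0$, and more generally $g_i^{k}$ sends $x_0$ to $(i+1,k)$ up to direction. Hence every $(e_x,1)$ is a conjugate of $u$ by an element of $\mcl G_n\subseteq N$, so it lies in $N$. Therefore $A_n\subseteq N$ and $N=W_n$.

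The only mildly delicate step is verifying transitivity with the correct direction conventions. There is no genuine obstacle, since the Schreier graph $Y_n$ described above is connected.
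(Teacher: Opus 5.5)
Your proof is correct and follows the paper's argument. Conjugating $u$ by $(0,\sigma)$ gives $(e_{x_0\cdot\sigma},1)$, and transitivity of $\mcl G_n$ on $X_n$ (your direct check via $g_i^{k}(x_0)=(i+1,k)$ and $g_1^{-k}(x_0)=(1,k+1)$ is accurate) then yields all of $A_n$, after which $(a,\sigma)=(0,\sigma)(a,1)$ finishes; your convention check $e_{x_0}\cdot\sigma=e_{\sigma^{-1}(x_0)}$ is also right, so the hedge is unnecessary.
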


\begin{proof}
The subgroup $\langle T_n\rangle$ contains $\mcl G_n$ and $u$.
By~\eqref{eq:wreath-product}, for $\sigma\in\mcl G_n$ we have
\[
(0,\sigma)^{-1}u(0,\sigma)=(e_{x_0}\cdot\sigma,1)
=(e_{x_0\cdot\sigma},1).
\]
The action of $\mcl G_n$ on $X_n$ is transitive; hence these conjugates
generate $A_n$. Finally, each element of $W_n$ has the form
$(a,\sigma)=(0,\sigma)(a,1)$.
\end{proof}

\begin{proposition}\label{prop:centralizer}
$C_{W_n}(u)=A_n\rtimes H_n$.
\end{proposition}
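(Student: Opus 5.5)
We compute the commutation condition directly from the multiplication formula~\eqref{eq:wreath-product}, writing an arbitrary element as $g=(a,\sigma)$ with $a\in A_n$ and $\sigma\in\mcl G_n$.

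First compute both products. We have
\[
ug=(e_{x_0},1)(a,\sigma)=(e_{x_0}\cdot\sigma+a,\sigma),
\qquad
gu=(a,\sigma)(e_{x_0},1)=(a+e_{x_0},\sigma),
\]
where in the second product $a\cdot1=a$. Hence $ug=gu$ if and only if $e_{x_0}\cdot\sigma=e_{x_0}$. This condition involves only $\sigma$ and places no restriction on $a$, since $A_n$ is abelian.

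Next, identify $e_{x_0}\cdot\sigma$. As already used in the proof of Proposition~\ref{prop:Wn-fg}, we have $e_{x_0}\cdot\sigma=e_{x_0\cdot\sigma}$. One checks this from $(e_{x_0}\cdot\sigma)(x)=e_{x_0}(x\cdot\sigma^{-1})$, which equals $1$ exactly when $x\cdot\sigma^{-1}=x_0$, i.e. when $x=x_0\cdot\sigma$. Since the basis vectors $e_x$ are distinct for distinct $x$, the condition $e_{x_0\cdot\sigma}=e_{x_0}$ is equivalent to $x_0\cdot\sigma=x_0$, that is, to $\sigma\in H_n$.

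Therefore $C_{W_n}(u)=\{(a,\sigma): a\in A_n,\ \sigma\in H_n\}$. It remains to note that this set is the subgroup $A_n\rtimes H_n$. It contains $A_n$ and $H_n$ under the identifications fixed above, and every element factors as $(a,\sigma)=(0,\sigma)(a,1)$. Moreover, $A_n$ is normal in $W_n$, so this set is the internal semidirect product.

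No step is a real obstacle; the only care needed is with the right-action convention and the sign $\sigma^{-1}$ in the definition of $a\cdot\sigma$, so that one correctly gets $e_{x_0\cdot\sigma}$.
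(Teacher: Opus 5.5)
Your proof is correct and is essentially the paper's argument. The paper computes the conjugate $u^{(a,\sigma)}=(e_{x_0\cdot\sigma},1)$ directly from the multiplication rule, while you compare $ug$ with $gu$; this is the same one-line semidirect-product computation, and your version does not need the inverse formula.
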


\begin{proof}
For $(a,\sigma)\in W_n$, a direct computation gives
$u^{(a,\sigma)}=(e_{x_0\cdot\sigma},1)$. Therefore
$u^{(a,\sigma)}=u$ if and only if $\sigma\in H_n$; there is no restriction
on $a$.
\end{proof}

\begin{theorem}\label{thm:Wn-ends}
The component $\opn{conj}(W_n)_{[u]}$ has exactly $n$ ends.
\end{theorem}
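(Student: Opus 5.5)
By Proposition~\ref{prop:conj-schreier}, applied to $W_n$ with the generating set $T_n$ of Proposition~\ref{prop:Wn-fg}, the component $\opn{conj}(W_n)_{[u]}$ is isomorphic to $\opn{Sch}(W_n,C_{W_n}(u),T_n)$. By Proposition~\ref{prop:centralizer}, $C_{W_n}(u)=A_n\rtimes H_n$. The plan is to identify this Schreier graph with $Y_n$, up to adding loops, and then use $e(Y_n)=n$.

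First, describe the coset space. The map $(a,\sigma)\mapsto x_0\cdot\sigma$ is constant on right cosets of $A_n\rtimes H_n$. Indeed, $(b,h)(a,\sigma)=(b\cdot\sigma+a,h\sigma)$ and $x_0\cdot h\sigma=x_0\cdot\sigma$. The induced map on cosets is injective: equal images force $\sigma\rho^{-1}\in H_n$, hence the two elements differ on the left by an element of $A_n\rtimes H_n$. It is surjective because $\mcl G_n$ acts transitively on $X_n$. Thus $C_{W_n}(u)\backslash W_n\cong X_n$. Equivalently, $u^{(a,\sigma)}=(e_{x_0\cdot\sigma},1)$, so the class $[u]$ is $\{(e_x,1):x\in X_n\}$.

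Next, compute the edges. For $s\in S_n$, right multiplication sends the coset of $(a,\sigma)$, labelled $x_0\cdot\sigma$, to the coset labelled $x_0\cdot\sigma s$. These are exactly the edges of $Y_n$ under the identification $H_n\sigma\mapsto x_0\cdot\sigma$ used in Subsection~\ref{sec:houghton}. For the generator $u=(e_{x_0},1)$, we have $(a,\sigma)u=(a\cdot 1+e_{x_0},\sigma)$, whose $\mcl G_n$-component is still $\sigma$. So $u$ contributes only a loop at every vertex. Alternatively, $u$ commutes with every $(e_x,1)$ since $A_n$ is abelian.

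Hence $\opn{Sch}(W_n,C_{W_n}(u),T_n)$ is $Y_n$ with one extra loop at each vertex. Loops do not change which finite vertex sets disconnect the graph, nor the infinite components of complements. Local finiteness is clear, so Definition~\ref{def:ends} applies verbatim, and the number of ends equals $e(Y_n)=n$.

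I do not expect a real obstacle. The only point needing care is the coset identification: the cosets must be taken as right cosets $C_G(u)g$ so that they match the right action $x\cdot\sigma=\sigma^{-1}(x)$. With $u^g=g^{-1}ug$ and the multiplication rule~\eqref{eq:wreath-product}, the conventions align as checked above.
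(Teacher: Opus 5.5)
Your proposal is correct and is essentially the paper's proof. The paper also identifies the component with $\opn{Sch}(W_n,A_n\rtimes H_n,T_n)$, shows the cosets biject with $H_n\backslash\mcl G_n$ via $(A_n\rtimes H_n)(a,\sigma)\mapsto H_n\sigma$ using $(a,\sigma)(b,\rho)^{-1}=((a-b)\cdot\rho^{-1},\sigma\rho^{-1})$, matches the $S_n$-edges with those of $Y_n$, and notes that $u$ contributes only loops; the only cosmetic difference is that you land directly in $X_n$ by composing with $H_n\sigma\mapsto x_0\cdot\sigma$.
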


\begin{proof}
By Propositions~\ref{prop:conj-schreier} and~\ref{prop:centralizer}, this
component is isomorphic to $\opn{Sch}(W_n,C_{W_n}(u),T_n)$. Consider the
map
\[
\Psi\colon (A_n\rtimes H_n)\backslash W_n
\longrightarrow H_n\backslash\mcl G_n,
\qquad
(A_n\rtimes H_n)(a,\sigma)\longmapsto H_n\sigma.
\]
This map is well defined and bijective. Indeed,
\[
(a,\sigma)(b,\rho)^{-1}=((a-b)\cdot\rho^{-1},\sigma\rho^{-1});
\]
hence the two cosets are equal if and only if $\sigma\rho^{-1}\in H_n$,
i.e., $H_n\sigma=H_n\rho$.
The map $\Psi$ takes the edge with label $s\in S_n$ to the edge
$H_n\sigma\longleftrightarrow H_n\sigma s$. The generator $u$ gives a loop
at each vertex, since
$(a+e_{x_0},\sigma)(a,\sigma)^{-1}=(e_{x_0}\cdot\sigma^{-1},1)\in A_n$.
Thus the graph $\opn{Sch}(W_n,C_{W_n}(u),T_n)$ is obtained from $Y_n$ by
adding loops, and loops do not change the number of ends.
\end{proof}

\begin{corollary}\label{cor:Wn-many-ends}
For $n\geqslant3$, the group $W_n$ is finitely generated and has a
conjugacy class such that the diagram of this class has a finite number of
ends greater than two.
\end{corollary}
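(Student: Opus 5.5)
The corollary is a direct assembly of results already established, so the plan is simply to combine them. Fix $n\geqslant3$. Finite generation of $W_n$ is exactly Proposition~\ref{prop:Wn-fg}: the set $T_n=S_n\cup\{u\}$ is finite because $S_n=\{g_1,\ldots,g_{n-1}\}$ generates $\mcl G_n$ for $n\geqslant3$ \cite[Lemma~2.7]{lee2012}. For the conjugacy class we take $[u]$ with $u=(e_{x_0},1)$. Theorem~\ref{thm:Wn-ends} then gives $e(\opn{conj}(W_n)_{[u]})=n$, and $n\geqslant3>2$ is finite.

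Two small checks make the statement meaningful. First, Definition~\ref{def:conj-graph} and the number of ends are taken with respect to the fixed finite generating set $T_n$. Independence of this choice follows from Proposition~\ref{prop:conj-schreier} together with \cite[Lemma~1.1]{scott1977ends}, since the diagram of the class is the Schreier graph of the pair $(W_n,C_{W_n}(u))$. Second, Definition~\ref{def:ends} requires the component to be locally finite and connected. Connectedness holds by definition of a component. Local finiteness holds because each vertex has at most $2|T_n|$ incident edges.

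There is no real obstacle. The only point to state explicitly is that the class $[u]$ is infinite, so the component is an infinite graph; this is also implicit in having $n\geqslant1$ ends. It follows from the bijection $\Psi$ in the proof of Theorem~\ref{thm:Wn-ends}, which identifies the vertex set with $H_n\backslash\mcl G_n\cong X_n$, an infinite set.
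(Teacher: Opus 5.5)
Your proposal is correct and follows the paper's own argument: the corollary is obtained by combining Proposition~\ref{prop:Wn-fg} (finite generation by $T_n$) with Theorem~\ref{thm:Wn-ends} ($n$ ends for the class $[u]$). Your extra checks (local finiteness, infiniteness of $[u]$, independence of the generating set) are sound but not required. One small wording slip: $T_n$ is finite simply because $S_n$ has $n-1$ elements, whereas the fact that $S_n$ generates $\mcl G_n$ is what Proposition~\ref{prop:Wn-fg} uses to show that $T_n$ generates $W_n$.
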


Indeed, this follows from Proposition~\ref{prop:Wn-fg} and
Theorem~\ref{thm:Wn-ends}.

\section{Applications to derivations and the Hochschild complex}\label{sec:applications}

In this section, we apply the results of Section~\ref{sec:ends} to
derivations of group algebras and their Hochschild cohomology.

\subsection{Components of the space of derivations}
\label{sec:der}

Throughout the sequel, $G$ is a finitely generated group with a fixed
finite generating set $S$, and $\CG$ denotes the group algebra of $G$.
\begin{definition}\label{def:derivations}
A \emph{derivation} is a linear operator $d\colon\CG\to\CG$ satisfying the
Leibniz rule $d(xy)=d(x)y+xd(y)$. The space of all derivations is denoted
by $\Der(\CG)$. For $c\in\CG$, the operator $d_c\colon x\mapsto[c,x]$ is
called \emph{inner}; the space of inner derivations is denoted by
$\Inn(\CG)$.
\end{definition}

\begin{definition}\label{def:conj-groupoid}
The \emph{groupoid of the adjoint action} $\Gamma$ is defined by
$\opn{Obj}(\Gamma)=G$ and
$\opn{Hom}(\Gamma)=\{(g,h):g,h\in G\}$. A morphism $(g,h)$ has source
$h^{-1}g$ and target $gh^{-1}$, and the composition is given by
\[
(g_2,h_2)\circ(g_1,h_1)=(h_2g_1,h_2h_1).
\]
Morphisms in $\opn{Hom}(x,x)$ are called \emph{loops}; they have the form
$(xz,z)$, where $z\in C_G(x)$.
\end{definition}

This parametrization of morphisms goes back to \cite{AM19}. The assignment
$(g,h)\mapsto(h^{-1}g,h)$ defines an isomorphism of $\Gamma$ onto the
action groupoid $\Gamma_\lambda$ from \cite[Section~1.1]{aru23} for the
conjugation action $\lambda(h,x)=hxh^{-1}$: in \cite{aru23}, the morphism
$(x,h)$ has source $x$ and target $hxh^{-1}$.

We also write the morphism $(g,h)$ as
\[
h^{-1}g\xrightarrow{\ h\ }gh^{-1}.
\]
In other words, the arrow with label $h$ and source $x$ has the form
$x\xrightarrow{h}hxh^{-1}=x^{h^{-1}}$ and corresponds to the pair
$(hx,h)$. If this arrow is followed by an arrow with label $h'$, then the
label of the composition is $h'h$.
Denote by $\Gamma_{[u]}$ the full subgroupoid of $\Gamma$ with object set
$[u]=u^G$.
\begin{remark}\label{rem:conj-groupoid-graph}
If we take the objects of $\Gamma$ as vertices and the morphisms $(g,s)$,
$s\in S$, as edges, forget the orientation of these edges, and identify
edges with the same endpoints and labels, then we obtain the conjugacy
diagram $\opn{conj}(G)$ from Definition~\ref{def:conj-graph}. Under this
correspondence, the component $\Gamma_{[u]}$ corresponds to
$\opn{conj}(G)_{[u]}$.
\end{remark}

\begin{definition}\label{def:local-character}
A \emph{character} on the groupoid $\Gamma$ is a function
$\chi\colon\opn{Hom}(\Gamma)\to\Cx$ additive with respect to composition.
A character is called \emph{locally finite} if for each $h\in G$ the value
$\chi((g,h))$ is nonzero only for finitely many $g\in G$ (cf.
\cite[Definitions~2 and~3]{aru23}).
\end{definition}

\begin{theorem}[\cite{AM19}]\label{thm:char}
The correspondence $d\mapsto\chi_d$ given by
\[
d(h)=h\sum_{y\in G}\chi_d(hy,h)y,
\qquad h\in G,
\]
is an isomorphism between $\Der(\CG)$ and the space of locally finite
characters on $\Gamma$.
\end{theorem}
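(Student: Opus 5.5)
We need to prove Theorem (AM19): derivations correspond bijectively to locally finite characters via $d(h)=h\sum_y\chi_d(hy,h)y$.

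The plan is to construct the map in both directions and check that each is well defined and that the two are mutually inverse. \textbf{From derivations to characters.} Given $d\in\Der(\CG)$, expand $h^{-1}d(h)=\sum_{y}c_h(y)\,y$ with finitely many nonzero coefficients, and put $\chi_d(hy,h)=c_h(y)$. Since every morphism of $\Gamma$ has the form $(g,h)=(hy,h)$ with $y=h^{-1}g$, this defines $\chi_d$ on all of $\opn{Hom}(\Gamma)$. Local finiteness is immediate: for fixed $h$, only finitely many $y$, hence finitely many $g=hy$, have $\chi_d(g,h)\neq0$.

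\textbf{Additivity.} This is the key step. Take composable morphisms $(g_1,h_1)$ and $(g_2,h_2)$, with target of the first equal to the source of the second, i.e.\ $g_1h_1^{-1}=h_2^{-1}g_2$, so $g_2=h_2g_1h_1^{-1}$. Apply the Leibniz rule to $h=h_2h_1$:
\[
(h_2h_1)^{-1}d(h_2h_1)=h_1^{-1}\bigl(h_2^{-1}d(h_2)\bigr)h_1+h_1^{-1}d(h_1).
\]
Compare the coefficients at $y$. The second term contributes $\chi_d(h_1y,h_1)$. The first term contributes the coefficient of $h_1yh_1^{-1}$ in $h_2^{-1}d(h_2)$, namely $\chi_d(h_2h_1yh_1^{-1},h_2)$. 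Set $g_1=h_1y$. Then the composite $(h_2g_1,h_2h_1)$ has $y'=(h_2h_1)^{-1}h_2g_1=y$, so the left-hand coefficient is $\chi_d(h_2g_1,h_2h_1)$. The right-hand terms are $\chi_d(g_1,h_1)$ and $\chi_d(h_2g_1h_1^{-1},h_2)=\chi_d(g_2,h_2)$. This is exactly
\[
\chi\bigl((g_2,h_2)\circ(g_1,h_1)\bigr)=\chi(g_2,h_2)+\chi(g_1,h_1).
\]

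\textbf{From characters to derivations.} Conversely, given a locally finite character $\chi$, define $d$ on the basis by $d(h)=h\sum_y\chi(hy,h)y$. This is a finite sum by local finiteness, so $d$ extends linearly to $\CG$. Reading the computation above backwards, additivity of $\chi$ gives $d(h_2h_1)=d(h_2)h_1+h_2d(h_1)$ on group elements, and bilinearity then gives the Leibniz rule on all of $\CG$. The two constructions are visibly inverse to each other, and both are linear, so we obtain a linear isomorphism.

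\textbf{Main obstacle.} The only delicate point is the bookkeeping in the additivity step: checking that conjugating $h_2^{-1}d(h_2)$ by $h_1$ reindexes the coefficients exactly according to the composition law of $\Gamma$, given the paper's conventions for sources and targets. Everything else is routine.
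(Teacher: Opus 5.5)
Your proof is correct and complete. Comparing coefficients at $y=h_1^{-1}g_1$ in $(h_2h_1)^{-1}d(h_2h_1)=h_1^{-1}\bigl(h_2^{-1}d(h_2)\bigr)h_1+h_1^{-1}d(h_1)$ gives exactly additivity of $\chi_d$ on the composable pair $(h_2g_1h_1^{-1},h_2)\circ(g_1,h_1)$. Every composable pair has this form for arbitrary $g_1,h_1,h_2$, so the equivalence holds in both directions.

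The paper does not prove this statement itself; it cites \cite{AM19}, and \cite[Section~4.1]{aru23} for $\Gamma_\lambda$. Your direct verification is the standard argument behind that citation.
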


In terms of the groupoid $\Gamma_\lambda$, this statement is given in
\cite[Section~4.1]{aru23}.

\begin{definition}\label{def:qinn}
A derivation is called \emph{quasi-inner} if the corresponding character
vanishes on all loops. The space of quasi-inner derivations is denoted by
$\QInn(\CG)$.
\end{definition}

Denote by $\Der(\Gamma_{[u]})$ the space of derivations with characters
vanishing outside $\Gamma_{[u]}$. Further, put
\[
\QInn(\Gamma_{[u]})=
\Der(\Gamma_{[u]})\cap\QInn(\CG),
\]
and denote by $\Inn(\Gamma_{[u]})$ the space of derivations $d_c$ with
$c\in\opn{span}_{\Cx}[u]$.
In general, these spaces are not closed under the commutator
(see \cite{arutyunov2023gradingstructurederivationsgroup}).

A character $\chi$ supported on $\Gamma_{[u]}$ and vanishing on loops has
the form $\chi(\varphi)=p(y)-p(x)$ for each arrow $\varphi\colon x\to y$,
where $p\colon[u]\to\Cx$ is a function (a potential) defined up to an
additive constant. Such a character is locally finite if and only if,
outside a finite subgraph, the potential $p$ is constant on each infinite
component \cite[Lemma~2]{aru23}.
Inner derivations correspond to potentials with finite support: for
$c=\sum_{x\in[u]}c_x x$, Theorem~\ref{thm:char} gives
$\chi_{d_c}((hy,h))=c_{hyh^{-1}}-c_y$, i.e., the function $x\mapsto c_x$ is
a potential of $\chi_{d_c}$.

The following result is based on these observations.
\begin{theorem}[{\cite[Theorem~1 and Corollary~2]{aru23}}]\label{thm:qinn}
Suppose the class $[u]$ is infinite and the diagram $\opn{conj}(G)_{[u]}$
has finitely many ends. Then
\[
\dim\bigl(\QInn(\Gamma_{[u]})/\Inn(\Gamma_{[u]})\bigr)
=e\bigl(\opn{conj}(G)_{[u]}\bigr)-1,
\]
where $e(\cdot)$ denotes the number of ends of a graph. In particular, the
right-hand side does not depend on the choice of the finite generating
set $S$.
\end{theorem}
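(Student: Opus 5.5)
The plan is to identify $\QInn(\Gamma_{[u]})/\Inn(\Gamma_{[u]})$ with a space of potentials on the vertex set $[u]$, taken modulo finitely supported potentials and constants, and then count that space end by end.

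First, describe quasi-inner derivations supported on $\Gamma_{[u]}$. By the remark preceding the theorem, a character supported on $\Gamma_{[u]}$ that vanishes on loops is given by a potential $p\colon[u]\to\Cx$, unique up to an additive constant. Indeed, since $\Gamma_{[u]}$ is connected, fix the value at $u$ and propagate $p$ along arrows. This is consistent because the character vanishes on loops, and every closed path in the groupoid composes to a loop. By \cite[Lemma~2]{aru23}, the character is locally finite exactly when there is a finite set $K$ of vertices such that $p$ is constant on each infinite component of $\opn{conj}(G)_{[u]}\setminus K$.

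The key point in the proof of that lemma is as follows. For a generator $s$, local finiteness forces $p(y)-p(x)=0$ for all but finitely many $s$-edges $x\to y$. Since $S$ is finite, the set $F$ of edges with nonzero difference is finite. Removing the endpoints of the edges in $F$ makes $p$ locally constant on what remains, hence constant on each remaining component. Conversely, a potential of this form gives finitely many nonzero values for each generator. For an arbitrary label $h$, write $h$ as a word in $S$ and use additivity together with the finiteness for generators to get finiteness for $h$.

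Second, identify the quotient. By the computation $\chi_{d_c}((hy,h))=c_{hyh^{-1}}-c_y$, inner derivations in $\Inn(\Gamma_{[u]})$ correspond exactly to finitely supported potentials, modulo constants. Hence
\[
\QInn(\Gamma_{[u]})/\Inn(\Gamma_{[u]})\cong \mathcal P/(\mathcal P_{\mathrm{fin}}+\Cx\mathbf 1),
\]
where $\mathcal P$ is the space of potentials that are eventually constant on ends. One must check that $\mathcal P_{\mathrm{fin}}\cap\Cx\mathbf 1=0$; this holds because $[u]$ is infinite, so a nonzero constant does not have finite support. Note that this is the first place where infinitude is used.

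Third, count dimensions. Let $e=e(\opn{conj}(G)_{[u]})<\infty$, and choose a finite set $K_0$ realizing the $e$ infinite components $U_1,\dots,U_e$. Define
\[
\Phi\colon\mathcal P\to\Cx^e
\]
by sending $p$ to its eventual values on the ends. This is well defined because enlarging $K$ does not split ends further, by maximality of $e$. Its kernel is $\mathcal P_{\mathrm{fin}}$: a potential vanishing eventually on every end is nonzero only on $K$, on finite components, and on finite parts of the $U_i$, which is a finite set. Here we use that the graph is locally finite, so $\opn{conj}(G)_{[u]}\setminus K_0$ has only finitely many finite components, each of them finite. The map $\Phi$ is surjective, since the indicator functions of the $U_i$ lie in $\mathcal P$. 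Therefore $\mathcal P/\mathcal P_{\mathrm{fin}}\cong\Cx^e$, and the constant $\mathbf 1$ maps to $(1,\dots,1)\neq0$. This gives dimension $e-1$.

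Independence of the generating set $S$ then follows because the left-hand side does not involve $S$.

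I expect the main obstacle to be the well-definedness of $\Phi$, namely that the end decomposition stabilizes once $K\supseteq K_0$. I would handle this by taking $K_0$ large enough that each $U_i$ has only one infinite component after removing any further finite set. Such a $K_0$ exists because the supremum $e$ is finite and is attained: if some $U_i$ split further, we would obtain more than $e$ infinite components.
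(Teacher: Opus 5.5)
Your proposal is correct and follows essentially the route the paper indicates. The paper sketches this argument before quoting the result from \cite{aru23}: loop-free characters on $\Gamma_{[u]}$ are given by potentials, \cite[Lemma~2]{aru23} characterizes the locally finite ones, inner derivations correspond to finitely supported potentials, and the quotient is counted by the eventual values on the $e$ ends modulo constants. Your justification that the eventual-value map is well defined (by maximality of $e$, each $U_i$ keeps exactly one infinite component after a further finite set is removed) fills in the detail that the paper leaves to the cited source.
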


\begin{corollary}\label{cor:anydim}
For each $k\in\mbb Z_{\geqslant0}$, there exist a finitely generated group
$G$ and an infinite conjugacy class $[u]$ such that
\[
\dim\bigl(\QInn(\Gamma_{[u]})/\Inn(\Gamma_{[u]})\bigr)=k.
\]
\end{corollary}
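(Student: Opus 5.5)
The plan is to handle the values of $k$ in three ranges, choosing in each case a group whose infinite class has exactly $k+1$ ends, and then to apply Theorem~\ref{thm:qinn}. The groups $W_n$ for $n\geqslant2$ cover every $k\geqslant1$. The remaining value $k=0$ requires a separate example with one end.

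\emph{Case $k\geqslant1$.} Put $n=k+1\geqslant2$, $G=W_n$ with generating set $T_n$, and $u=(e_{x_0},1)$. By Proposition~\ref{prop:Wn-fg}, $G$ is finitely generated. By Theorem~\ref{thm:Wn-ends}, $\opn{conj}(W_n)_{[u]}$ has exactly $n$ ends, in particular finitely many. The class $[u]$ is infinite: by Propositions~\ref{prop:conj-schreier} and~\ref{prop:centralizer}, its elements correspond bijectively to the cosets $(A_n\rtimes H_n)\backslash W_n$, and through $\Psi$ these correspond to $H_n\backslash\mcl G_n\cong X_n$, which is infinite. Explicitly, $[u]=\{(e_x,1):x\in X_n\}$. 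Theorem~\ref{thm:qinn} then gives dimension $n-1=k$.

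\emph{Case $k=0$.} We need an infinite class whose diagram has exactly one end. The first choice to try is $G=\mbb Z\wr\mbb Z^2$, or more simply $\mbb Z\wr_{X}Q$ with $Q=\mbb Z^2$ acting on $X=\mbb Z^2$ by translations, and $u=(e_0,1)$. The argument from Theorem~\ref{thm:Wn-ends} carries over verbatim. The centralizer of $u$ is $A\rtimes\opn{Stab}(0)=A$. The conjugacy diagram of $[u]$ is then the Cayley graph of $\mbb Z^2$ with loops added. This graph is infinite and has one end, since removing a finite set leaves exactly one infinite component. Finite generation follows as in Proposition~\ref{prop:Wn-fg}, because the action is transitive. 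Theorem~\ref{thm:qinn} gives dimension $0$.

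The $k=0$ case is the only step requiring any care: the main point to verify is that Proposition~\ref{prop:centralizer} and the coset bijection $\Psi$ hold for an arbitrary transitive action. This is a short recomputation with \eqref{eq:wreath-product}, in which $H_n$ is replaced by the point stabilizer.
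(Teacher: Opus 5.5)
Your proposal is correct and follows the paper's proof: for $k\geqslant1$ take $W_{k+1}$ with $u=(e_{x_0},1)$, and for $k=0$ take $\mbb Z^{(\mbb Z^2)}\rtimes\mbb Z^2$ (translation action) with $u=(e_0,1)$, whose class diagram is the Cayley graph of $\mbb Z^2$ with loops added; in both cases Theorem~\ref{thm:qinn} finishes the argument. Two small remarks: your explicit check that $[u]$ is infinite is a detail the paper leaves implicit, and your opening sentence promises three ranges of $k$ although only the two cases you actually give are needed.
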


\begin{proof}
For $k\geqslant1$, take $G=W_{k+1}$ and $u=(e_{x_0},1)$. By
Theorem~\ref{thm:Wn-ends}, the corresponding component has $k+1$ ends. For
$k=0$, take $G=\mbb Z^{(\mbb Z^2)}\rtimes\mbb Z^2$, where $\mbb Z^2$ acts by
translations, and $u=(e_0,1)$. This group is generated by $u$ and the
standard generators of $\mbb Z^2$. As in the proof of
Theorem~\ref{thm:Wn-ends}, the diagram of the class $[u]$ is obtained from
the Cayley graph of $\mbb Z^2$ by adding loops; hence this diagram has one
end. In both cases, it remains to apply Theorem~\ref{thm:qinn}.
\end{proof}

\subsection{Admissible characters}\label{subsec:admissible}

The number of ends allows us to compute the dimension of the quotient
space of quasi-inner derivations modulo inner derivations. To describe
outer derivations, we must also deal with characters that are nontrivial on
loops.

For a character $\chi$ on $\Gamma$ and elements $x,g\in G$, put
\[
B_g(x)=\chi((xg,g)),
\]
where $(xg,g)$ is the morphism from $g^{-1}xg$ to $x$.
If $g\in C_G(x)$, then this is the value of the character on a loop
at~$x$.

\begin{definition}\label{def:admissible}
A group character $\kappa\colon C_G(u)\to(\Cx,+)$ is called
\emph{admissible} if there exists a locally finite character $\chi$ on
$\Gamma$ supported on $\Gamma_{[u]}$ such that $\kappa(z)=B_z(u)$ for all
$z\in C_G(u)$. The space of admissible characters is denoted by
$\opn{Adm}_u(G)$.
\end{definition}

Since $\opn{Adm}_u(G)\subseteq\opn{Hom}(C_G(u),\Cx)$ and
$\opn{Hom}(C_G(u),\Cx)\cong
\opn{Hom}_{\mbb Q}\bigl(C_G(u)^{\mathrm{ab}}\otimes\mbb Q,\Cx\bigr)$,
the dimension of $\opn{Adm}_u(G)$ does not exceed the rank of the
abelianization $C_G(u)^{\mathrm{ab}}$ provided this rank is finite.

\begin{proposition}\label{prop:admissible}
Restriction of characters to the loops at the object $u$ defines an
isomorphism
\[
\Der(\Gamma_{[u]})/\QInn(\Gamma_{[u]})
\cong\opn{Adm}_u(G).
\]
In particular, the sequence
\begin{equation}\label{eq:admissible-sequence}
0\longrightarrow
\QInn(\Gamma_{[u]})/\Inn(\Gamma_{[u]})
\longrightarrow
\Der(\Gamma_{[u]})/\Inn(\Gamma_{[u]})
\longrightarrow
\opn{Adm}_u(G)
\longrightarrow0
\end{equation}
is exact. If the class $[u]$ is infinite and the diagram
$\opn{conj}(G)_{[u]}$ has finitely many ends, then
\begin{equation}\label{eq:admissible-dimension}
\dim\bigl(\Der(\Gamma_{[u]})/\Inn(\Gamma_{[u]})\bigr)
=e\bigl(\opn{conj}(G)_{[u]}\bigr)-1+\dim\opn{Adm}_u(G).
\end{equation}
\end{proposition}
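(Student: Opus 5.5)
We consider the linear map
\[
R\colon\Der(\Gamma_{[u]})\longrightarrow\opn{Hom}(C_G(u),\Cx),\qquad d\longmapsto\bigl(z\mapsto B_z(u)=\chi_d((uz,z))\bigr),
\]
where $\chi_d$ is the locally finite character attached to $d$ by Theorem~\ref{thm:char}. We then identify its image and its kernel; the isomorphism follows, and the exact sequence and the dimension formula are formal consequences of it.

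First we check that $R(d)$ is a group character. The loops at $u$ are the morphisms $(uz,z)$ with $z\in C_G(u)$. By the composition rule, $(uz_2,z_2)\circ(uz_1,z_1)=(z_2uz_1,z_2z_1)=(uz_2z_1,z_2z_1)$, since $z_2$ commutes with $u$. Additivity of $\chi_d$ therefore gives $B_{z_2z_1}(u)=B_{z_2}(u)+B_{z_1}(u)$. By Definition~\ref{def:admissible}, the image of $R$ is exactly $\opn{Adm}_u(G)$: an admissible $\kappa$ comes from a locally finite $\chi$ supported on $\Gamma_{[u]}$, and such a $\chi$ is $\chi_d$ for some $d\in\Der(\Gamma_{[u]})$ by Theorem~\ref{thm:char}.

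The main step is the kernel, namely showing that if $\chi$ is supported on $\Gamma_{[u]}$ and vanishes on the loops at $u$, then it vanishes on all loops of $\Gamma_{[u]}$. Take an object $x=u^{g}$ and a loop $\ell$ at $x$. Choose any morphism $\varphi\colon x\to u$; one exists because $\Gamma_{[u]}$ is connected. Then $\varphi\circ\ell\circ\varphi^{-1}$ is a loop at $u$, and additivity gives
\[
\chi(\varphi\circ\ell\circ\varphi^{-1})=\chi(\varphi)+\chi(\ell)-\chi(\varphi)=\chi(\ell),
\]
where $\chi(\varphi^{-1})=-\chi(\varphi)$ follows from additivity applied to the identity morphisms, on which $\chi$ is $0$. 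Hence $\chi(\ell)=0$. Characters vanishing outside $\Gamma_{[u]}$ vanish trivially on the loops outside $\Gamma_{[u]}$. Thus $\ker R=\Der(\Gamma_{[u]})\cap\QInn(\CG)=\QInn(\Gamma_{[u]})$, and $R$ induces the claimed isomorphism. The only care needed here is handling inverses and identities in the groupoid.

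For exactness of \eqref{eq:admissible-sequence}, we use the inclusions $\Inn(\Gamma_{[u]})\subseteq\QInn(\Gamma_{[u]})\subseteq\Der(\Gamma_{[u]})$. Inner derivations $d_c$ with $c\in\opn{span}_{\Cx}[u]$ have potentials, so their characters vanish on loops; they are supported on $\Gamma_{[u]}$ by the formula $\chi_{d_c}((hy,h))=c_{hyh^{-1}}-c_y$. Exactness then follows from the third isomorphism theorem:
\[
\bigl(\Der(\Gamma_{[u]})/\Inn(\Gamma_{[u]})\bigr)\big/\bigl(\QInn(\Gamma_{[u]})/\Inn(\Gamma_{[u]})\bigr)\cong\Der(\Gamma_{[u]})/\QInn(\Gamma_{[u]})\cong\opn{Adm}_u(G).
\]
Finally, under the stated hypotheses Theorem~\ref{thm:qinn} gives the dimension $e\bigl(\opn{conj}(G)_{[u]}\bigr)-1$ for the first term, which is finite. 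Additivity of dimension in the short exact sequence then yields \eqref{eq:admissible-dimension}, interpreted in $\mbb Z_{\geqslant0}\cup\{\infty\}$ if $\opn{Adm}_u(G)$ is infinite-dimensional.
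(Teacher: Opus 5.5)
Your proof is correct and follows the same route as the paper: you identify the image of the restriction map with $\opn{Adm}_u(G)$ directly from Definition~\ref{def:admissible} and Theorem~\ref{thm:char}, compute the kernel by conjugating an arbitrary loop of $\Gamma_{[u]}$ to a loop at $u$, and obtain the dimension formula from Theorem~\ref{thm:qinn}. You also spell out details the paper leaves implicit: that $\chi$ vanishes on identities, so $\chi(\varphi^{-1})=-\chi(\varphi)$; that $\Inn(\Gamma_{[u]})\subseteq\QInn(\Gamma_{[u]})$; and that exactness follows from the third isomorphism theorem.
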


\begin{proof}
For $d\in\Der(\Gamma_{[u]})$, put $\chi=\chi_d$ and
$\kappa_d(z)=B_z(u)$, where $z\in C_G(u)$. The additivity of $\chi$ shows
that $\kappa_d$ is a group character: the loop corresponding to $z_1z_2$
is the composition of the loops corresponding to $z_2$ and $z_1$. By
Definition~\ref{def:admissible}, the image of the map $d\mapsto\kappa_d$ is
$\opn{Adm}_u(G)$.
Let $\eta\colon u\to x$, and let $\omega$ be a loop at $x$. Then
\[
\chi(\eta^{-1}\circ\omega\circ\eta)=\chi(\omega),
\]
since the values of $\chi$ on $\eta$ and $\eta^{-1}$ have opposite signs.
Therefore a character vanishes on the loops at $u$ if and only if it
vanishes on all loops of the component. Hence the kernel of the map
$d\mapsto\kappa_d$ is $\QInn(\Gamma_{[u]})$.
Formula~\eqref{eq:admissible-dimension} now follows from
Theorem~\ref{thm:qinn}.
\end{proof}

We note some general properties of loops.
\begin{proposition}\label{prop:restriction-loops}
Let $\chi$ be a locally finite character on $\Gamma_{[u]}$, let $x\in[u]$,
and let $z,z'\in C_G(x)$. Then
\begin{enumerate}
\item $B_{zz'}(x)=B_z(x)+B_{z'}(x)$;
\item $B_{gzg^{-1}}(gxg^{-1})=B_z(x)$ for all $g\in G$;
\item for a fixed $z\in G$, the value $B_z(y)$ is nonzero only for finitely
many $y\in[u]\cap C_G(z)$.
\end{enumerate}
\end{proposition}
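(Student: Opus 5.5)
The plan is to work directly with the parametrization of morphisms $(g,h)$ and the composition law of Definition~\ref{def:conj-groupoid}, together with the additivity of $\chi$. Throughout, the morphism $(xz,z)$ is the arrow $z^{-1}xz\xrightarrow{z}xz^{-1}$. For $z\in C_G(x)$ this is a loop at $x$, and its value is $B_z(x)$.

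For (1), take $z,z'\in C_G(x)$ and compose the loops $(xz',z')$ and $(xz,z)$. By the rule $(g_2,h_2)\circ(g_1,h_1)=(h_2g_1,h_2h_1)$ we get
\[
(xz,z)\circ(xz',z')=(zxz',zz')=(xzz',zz'),
\]
using $zx=xz$. Additivity of $\chi$ then gives $B_{zz'}(x)=B_z(x)+B_{z'}(x)$. This is the computation already sketched in the proof of Proposition~\ref{prop:admissible}.

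For (2), set $y=gxg^{-1}$ and $w=gzg^{-1}$, so that $w\in C_G(y)$. Let $\eta$ be the arrow $x\xrightarrow{g}gxg^{-1}=y$, which is the pair $(gx,g)$. The plan is to verify the identity
\[
\eta\circ(xz,z)\circ\eta^{-1}=(yw,w)
\]
directly from the composition rule. Here $\eta^{-1}=(g^{-1}y,g^{-1})=(xg^{-1},g^{-1})$, and the labels of the composite multiply to $g\,z\,g^{-1}=w$. Since the source and target are both $y$, the composite must be the morphism $(yw,w)$: a morphism is determined by its label and its source. Then
\[
\chi(\eta)+\chi((xz,z))+\chi(\eta^{-1})=B_w(y),
\]
and $\chi(\eta^{-1})=-\chi(\eta)$ because $\chi(\mathrm{id})=0$ by additivity. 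This yields $B_w(y)=B_z(x)$.

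For (3), fix $z$. Local finiteness (Definition~\ref{def:local-character}) says that $\chi((g,z))\neq0$ for only finitely many $g$. Since $B_z(y)=\chi((yz,z))$ and the map $y\mapsto yz$ is injective, $B_z(y)$ can be nonzero for only finitely many $y\in G$, and in particular for only finitely many $y\in[u]\cap C_G(z)$. The restriction to $C_G(z)$ only ensures that these are loop values.

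I expect no real obstacle. The only delicate point is bookkeeping in (2): checking the order of composition and that the label of $\eta^{-1}$ is $g^{-1}$ with the correct source and target. I will handle this by using the arrow notation $x\xrightarrow{h}hxh^{-1}$, in which labels simply multiply under composition.
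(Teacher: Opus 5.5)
Your proof is correct and follows the paper's argument: in (1) you compose the two loops at $x$, in (2) you conjugate the loop by the arrow $\eta\colon x\xrightarrow{g}gxg^{-1}$ and use $\chi(\eta)+\chi(\eta^{-1})=0$, and in (3) you apply local finiteness to the fixed label $z$. One slip: the target of $(xz,z)$ is $(xz)z^{-1}=x$, not $xz^{-1}$, and this is exactly what makes $(xz,z)$ a loop at $x$ when $z\in C_G(x)$.
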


\begin{proof}
The loops with labels $z'$ and $z$ can be traversed successively at the
object $x$; this gives the first equality. Let $\eta$ denote the arrow
$x\xrightarrow{g}gxg^{-1}$. Then
$\eta\circ(x\xrightarrow{z}x)\circ\eta^{-1}$
is a loop with label $gzg^{-1}$. The value of $\chi$ on this loop is
$B_z(x)$, since $\chi(\eta)+\chi(\eta^{-1})=0$.
The third assertion follows from the local finiteness of $\chi$ for the
fixed label $z$.
\end{proof}

\subsection{Admissible characters of \texorpdfstring{$W_n$}{Wn}}\label{subsec:Wn-der}
For $x\in X_n$, denote by $u_x$ the element $(e_x,1)$ of $W_n$; in
particular, $u=u_{x_0}$. Then $[u]=\{u_x:x\in X_n\}$, and each arrow of
this component has the form
\[
u_x\xrightarrow{\ (a,\sigma)\ }u_{\sigma(x)},
\qquad a\in A_n,\quad \sigma\in\mcl G_n.
\]
In terms of the right action introduced above,
$x\cdot\sigma^{-1}=(\sigma^{-1})^{-1}(x)=\sigma(x)$.
Recall that $t_j(\sigma)$ is the shift of $\sigma$ on the $j$th ray:
$\sigma(j,m)=(j,m+t_j(\sigma))$ for all sufficiently large $m$.
The shifts of the generator $g_i$, $1\leqslant i\leqslant n-1$, are $-1$
on the first ray and $1$ on the $(i+1)$th ray; $g_i$ fixes all other rays
pointwise.
Denote by $F\trianglelefteq\mcl G_n$ the group of finitary permutations
of $X_n$.
In accordance with the identification of $A_n$ and $\mcl G_n$ with
subgroups of $W_n$, we write $B_a=B_{(a,1)}$ for $a\in A_n$ and
$B_\sigma=B_{(0,\sigma)}$ for $\sigma\in\mcl G_n$.
\begin{lemma}\label{lem:houghton-chars}
Let $\chi$ be a locally finite character on $\Gamma_{[u]}$. Then for some
$\mu\in\Cx$ and all $\sigma\in H_n$, we have
\[
B_\sigma(u)=
\begin{cases}
\mu t_1(\sigma),&n=2,\\
0,&n\geqslant3.
\end{cases}
\]
\end{lemma}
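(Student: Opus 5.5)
The plan is to show first that $\sigma\mapsto B_\sigma(u)$ is a homomorphism $H_n\to(\Cx,+)$ that depends only on the shifts of $\sigma$. Then, for $n\geqslant3$, local finiteness, in the form of Proposition~\ref{prop:restriction-loops}(3), will force it to vanish.

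\emph{Step 1: the homomorphism factors through the shifts.} By Proposition~\ref{prop:restriction-loops}(1), $\kappa(\sigma)=B_\sigma(u)$ is additive on $H_n\leqslant C_{W_n}(u)$. Identify $X_n\setminus\{x_0\}$ with a union of $n$ rays; this gives the isomorphism $H_n\cong\mcl G_n$ noted in Subsection~\ref{sec:houghton}. Under it the shifts $t_j(\sigma)$ are unchanged, since they only depend on the behaviour of $\sigma$ far out on each ray. The kernel of $\sigma\mapsto(t_1(\sigma),\ldots,t_n(\sigma))$ on $H_n$ is the group of finitary permutations of $X_n\setminus\{x_0\}$. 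Its commutator subgroup is the finitary alternating group, of index $2$. Since $(\Cx,+)$ is torsion-free, $\kappa$ kills this whole kernel. Hence $\kappa$ factors through the image of the shift map, which is a subgroup of $\{t\in\mbb Z^n:\sum t_j=0\}$. Therefore
\[
\kappa(\sigma)=\sum_{j=1}^n c_j t_j(\sigma)
\]
for some constants $c_j\in\Cx$. For $n=2$ we have $t_2=-t_1$, so $\kappa(\sigma)=\mu t_1(\sigma)$ with $\mu=c_1-c_2$, which already proves the case $n=2$.

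\emph{Step 2: transporting loops to other base points.} Let $\tau\in\mcl G_n$ and let $y\in X_n$ be a point fixed by $\tau$. Choose $g\in\mcl G_n$ with $g(x_0)=y$; this is possible by transitivity. Conjugating by $g$ sends $u$ to $u_y$, as in the description of arrows $u_x\to u_{\sigma(x)}$, and sends $g^{-1}\tau g\in H_n$ to $\tau$. Proposition~\ref{prop:restriction-loops}(2) then gives
\[
B_\tau(u_y)=\kappa(g^{-1}\tau g).
\]
Conjugation in $\mcl G_n$ preserves all shifts, so by Step 1 this equals $\sum_j c_jt_j(\tau)$, which is independent of $y$. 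Proposition~\ref{prop:restriction-loops}(3), applied with the fixed label $(0,\tau)$, says that $B_\tau(u_y)\neq0$ for only finitely many $y$ with $u_y\in C_{W_n}((0,\tau))$, i.e.\ with $\tau(y)=y$. So whenever $\tau$ fixes infinitely many points, we get $\sum_j c_jt_j(\tau)=0$.

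\emph{Step 3: the case $n\geqslant3$.} For distinct rays $i,j$, pick a third ray $k$. Take $\tau$ to be the "translation from ray $i$ to ray $j$" (a conjugate of $g$-type generators) supported on rays $i$ and $j$ together with finitely many points; it fixes ray $k$ pointwise and has $t_i(\tau)=-1$, $t_j(\tau)=1$. Step 2 yields $c_i=c_j$. Thus all $c_j$ are equal, and $\kappa(\sigma)=c\sum_jt_j(\sigma)=0$.

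The main obstacle is Step 1: justifying cleanly that every additive character of $H_n$ is a linear combination of shifts, i.e.\ that the finitary part dies in $\Cx$. The rest is a bookkeeping check that conjugation by $g$ matches the arrow and loop conventions of Proposition~\ref{prop:restriction-loops}(2).
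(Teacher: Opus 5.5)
Your proposal is correct and follows essentially the same route as the paper. Both arguments kill the finitary part of $H_n$, then use local finiteness (Proposition~\ref{prop:restriction-loops}(3)) together with conjugation invariance of loop values to show that the character vanishes on elements with infinitely many fixed points, and finally use two-ray elements whose shift vectors span $\{k\in\mbb Z^n:k_1+\cdots+k_n=0\}$.

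The only difference is cosmetic: you move a fixed point $y$ to $x_0$ by an arbitrary $g$ and use that shifts are conjugation invariant, whereas the paper conjugates $\sigma\in H_n$ by a transposition commuting with it and works directly with $h_i=\beta_i g_i\in H_n$.
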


\begin{proof}
Since the elements of $H_n$ fix $x_0$, the map $\sigma\mapsto B_\sigma(u)$
is a group character on $H_n$. This character vanishes on $F\cap H_n$,
since each element of this group has finite order.
Suppose $\sigma\in H_n$ fixes infinitely many points of $X_n$.
By local finiteness, there exists a fixed point $x\ne x_0$ of $\sigma$
such that $B_\sigma(u_x)=0$.
Let $f\in F$ be the transposition of $x$ and $x_0$.
Since $\sigma$ fixes both points, $\sigma$ commutes with $f$.
By Proposition~\ref{prop:restriction-loops}(2), we get
\[
0=B_\sigma(u_x)
=B_{f\sigma f^{-1}}(u_{f(x)})
=B_\sigma(u).
\]
For $n=2$, we have
\[
\ker(t_1|_{H_2})=F\cap H_2
\subseteq\ker\bigl(\sigma\mapsto B_\sigma(u)\bigr).
\]
By the homomorphism theorem, the homomorphism $\sigma\mapsto B_\sigma(u)$
factors through $t_1|_{H_2}$. In addition, $t_1(H_2)=\mbb Z$: indeed,
$\beta g_1\in H_2$ and $t_1(\beta g_1)=-1$. Since every homomorphism
$\mbb Z\to\Cx$ has the form $k\mapsto\mu k$, we obtain
$B_\sigma(u)=\mu t_1(\sigma)$.
For $n\geqslant3$, the kernel of the homomorphism
$\sigma\mapsto(t_1(\sigma),\ldots,t_n(\sigma))$ on $H_n$ is
$F\cap H_n$. The homomorphism $\sigma\mapsto B_\sigma(u)$ vanishes on this
kernel; therefore, by the homomorphism theorem, this homomorphism is
determined by the shift vector.
The shift vectors lie in the group
\[
\bigl\{(k_1,\ldots,k_n)\in\mbb Z^n:
k_1+\cdots+k_n=0\bigr\}.
\]
For each $i=1,\ldots,n-1$, let $\beta_i\in F$ be the transposition of
$x_0$ and $(i+1,1)$. Then $h_i=\beta_i g_i\in H_n$ fixes all points of the
rays other than the first and the $(i+1)$th.
Hence $B_{h_i}(u)=0$.
The shift vectors of $h_i$ coincide with those of $g_i$ and form a basis
of the group above. Therefore the character $\sigma\mapsto B_\sigma(u)$
vanishes on all of $H_n$.
\end{proof}

\begin{theorem}\label{thm:Wn-admissible}
Let $u=(e_{x_0},1)\in W_n$. Then
\[
\opn{Adm}_u(W_n)=
\begin{cases}
\Cx\varepsilon\oplus\Cx\tau,&n=2,\\
\Cx\varepsilon,&n\geqslant3,
\end{cases}
\]
where the characters $\varepsilon$ and $\tau$ on $C_{W_n}(u)=A_n\rtimes H_n$
are given by
\[
\varepsilon(a,\sigma)=a(x_0),
\qquad
\tau(a,\sigma)=t_1(\sigma)\quad(n=2).
\]
Consequently,
\[
\Der(\Gamma_{[u]})/\QInn(\Gamma_{[u]})
\cong
\begin{cases}
\Cx^2,&n=2,\\
\Cx,&n\geqslant3,
\end{cases}
\]
and
\[
\Der(\Gamma_{[u]})/\Inn(\Gamma_{[u]})
\cong
\begin{cases}
\Cx^3,&n=2,\\
\Cx^n,&n\geqslant3.
\end{cases}
\]
The isomorphism in the last formula is not canonical.
\end{theorem}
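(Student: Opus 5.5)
We prove the two inclusions separately: first an upper bound on $\opn{Adm}_u(W_n)$, then explicit locally finite characters realizing $\varepsilon$ (and $\tau$ when $n=2$). The quotient dimensions then follow from Proposition~\ref{prop:admissible}, Theorem~\ref{thm:Wn-ends} and formula~\eqref{eq:admissible-dimension}. The class $[u]=\{u_x\}$ is infinite and has $n$ ends, so $\dim\Der/\Inn=n-1+\dim\opn{Adm}_u$, which equals $3$ for $n=2$ and $n$ for $n\geqslant3$.

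\emph{Upper bound.} Let $\kappa\in\opn{Adm}_u(W_n)$ come from a locally finite $\chi$. Since $(a,\sigma)=(0,\sigma)(a,1)$ and $\kappa$ is a homomorphism on $A_n\rtimes H_n$, we have $\kappa(a,\sigma)=B_\sigma(u)+B_a(u)$ by Proposition~\ref{prop:restriction-loops}(1). Lemma~\ref{lem:houghton-chars} gives the $H_n$ part: it is $\mu\tau$ for $n=2$ and $0$ for $n\geqslant3$. For the $A_n$ part, $a\mapsto B_a(u)$ is additive on $A_n$, so it suffices to treat $a=e_y$. For $y\ne x_0$, pick a finitary $f\in H_n$ with $f(y)=y'$ for an arbitrary $y'\ne x_0$. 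Conjugating by $f$ in Proposition~\ref{prop:restriction-loops}(2) shows that $B_{e_y}(u)$ is independent of $y\ne x_0$, say equal to $c$.

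Now apply local finiteness (Proposition~\ref{prop:restriction-loops}(3)) to the fixed label $e_y$. The element $e_y$ centralizes every $u_x$ because $A_n$ is abelian, so $B_{e_y}(u_x)=0$ for all but finitely many $x$. Pick $x\ne y$, $x\ne x_0$ with $B_{e_y}(u_x)=0$. Choose a finitary $g$ with $g(x_0)=x$ and $g(y)=y$ (a transposition of $x_0$ and $x$). Conjugating by $g$ gives $B_{e_y}(u_x)=B_{e_{y}}(u)=c$, so $c=0$; some care with the left/right action convention is needed here. Hence $\kappa(a,\sigma)=\lambda a(x_0)+(\mu t_1(\sigma))$ with $\lambda=B_{e_{x_0}}(u)$. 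This shows $\opn{Adm}_u\subseteq\Cx\varepsilon(\oplus\Cx\tau)$.

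\emph{Existence.} For $\varepsilon$, the natural candidate is the character of a derivation. We use the derivation $d(w)=\psi(w)w$ for a homomorphism $\psi\colon W_n\to\Cx$, or more cleverly one supported on $[u]$. Define $\chi$ on $\Gamma_{[u]}$ by
\[
\chi\bigl(u_x\xrightarrow{(a,\sigma)}u_{\sigma(x)}\bigr)=a(x)
\]
or the analogous coordinate read off with the correct convention, i.e.\ the coefficient of $a$ at the point corresponding to the source. We check additivity under composition using the multiplication rule~\eqref{eq:wreath-product}; this is a cocycle computation for the permutation module, and the evaluation map $A_n\to\Cx$ at the moving point makes it a cocycle. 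Local finiteness is clear: for fixed label $(a,\sigma)$ the value is nonzero only for $x\in\opn{supp}(a)$, which is finite. On loops at $u$ the value is $a(x_0)=\varepsilon$.

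For $n=2$ and $\tau$, define $\chi$ on arrows with label $\sigma$ from $u_x$ by $\chi=p(\sigma(x))-p(x)+(\text{correction})$, where $p(i,m)$ equals $m$ on ray $1$ and $-m$ on ray $2$, arranged so that $\chi(u_x\xrightarrow{\sigma}u_{\sigma(x)})=\phi(\sigma(x))-\phi(x)-t_1(\sigma)$-type expression. The guiding observation is that for $n=2$, $X_2\cong\mbb Z$ via $(1,m)\mapsto -m+1$ and $(2,m)\mapsto m$, and $\mcl G_2$ acts as eventually-translations, with $t_1$ the translation amount at the ends. The function $c(\sigma,x)=\ell(\sigma(x))-\ell(x)$, where $\ell$ is this linear coordinate, is a cocycle. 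For each fixed $\sigma$ it equals $-t_1(\sigma)$ outside a finite set, so $c(\sigma,x)+t_1(\sigma)$ is locally finite and still additive, since $t_1$ is a homomorphism. On loops at $u$ it gives $t_1(\sigma)$ up to sign, which is $\tau$. We extend to labels $(a,\sigma)$ by ignoring $a$.

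The main obstacle is the existence part. Conventions (left versus right action, source versus target) must be matched so that the formulas are genuinely additive, and we must verify that for $n\geqslant3$ no analogous $\tau$ exists, which Lemma~\ref{lem:houghton-chars} already handles.
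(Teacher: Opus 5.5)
Your proposal is correct and follows essentially the paper's proof: the same splitting $(a,\sigma)=(0,\sigma)(a,1)$, with Lemma~\ref{lem:houghton-chars} handling the $H_n$-part; conjugation by finitary transpositions plus local finiteness to show $B_{e_y}(u)=0$ for $y\ne x_0$ (the paper does this in one step, via transitivity of $F$ on pairs with $x\ne y$); and the same witnesses $\chi_\varepsilon(\varphi)=a(x)$ and $\ell(\sigma(x))-\ell(x)+t_1(\sigma)$, where your $\ell$ differs from the paper's potential only by a constant on the first ray. Your convention worries go away because transpositions are involutions and $(b\cdot\sigma)(x)=b(\sigma(x))$ makes the source-coordinate formula additive; you should also note that $\tau\ne0$ (e.g.\ $\beta g_1\in H_2$ has $t_1(\beta g_1)=-1$), so that $\Cx\varepsilon\oplus\Cx\tau$ really is two-dimensional.
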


\begin{proof}
Take an arbitrary locally finite character $\chi$ supported on
$\Gamma_{[u]}$. First consider loops with labels in $A_n$.
Since $A_n$ is abelian and contains $[u]$, each element of $A_n$ commutes
with all elements of $[u]$. Therefore the properties from
Proposition~\ref{prop:restriction-loops} apply to the functions
$B_a(u_x)$.
Conjugating by the arrow with label $(0,f)$, $f\in F$, we get
\[
B_{e_{f(y)}}(u_{f(x)})=B_{e_y}(u_x).
\]
The group $F$ is transitive on the pairs $(x,y)$ with $x=y$ and on the
pairs with $x\ne y$. Hence $B_{e_y}(u_x)$ takes one value for $x=y$ and
another value for $x\ne y$. The second value is zero, since for a fixed
$e_y$ the function $x\mapsto B_{e_y}(u_x)$ has finite support. By
additivity in $a$, this yields $B_a(u_x)=B_{a(x)e_x}(u_x)$.
The value $B_{e_x}(u_x)$ does not depend on $x$; denote this value by
$\theta$. Then $B_a(u_x)=\theta\,a(x)$.
For labels of the form $(0,\sigma)$, we use Lemma~\ref{lem:houghton-chars}.
By Proposition~\ref{prop:centralizer}, any loop at $u=u_{x_0}$ has a label
$(a,\sigma)$, where $a\in A_n$ and $\sigma\in H_n$.
The equality $(a,\sigma)=(0,\sigma)(a,1)$ decomposes this loop into the
loop with label $(a,1)$ followed by the loop with label $(0,\sigma)$.
Therefore the value of $\chi$ on this loop is
\[
B_{(a,\sigma)}(u)=B_a(u)+B_\sigma(u)
=
\begin{cases}
\theta a(x_0)+\mu t_1(\sigma),&n=2,\\
\theta a(x_0),&n\geqslant3.
\end{cases}
\]
This proves that the space of admissible characters is contained in the
space indicated in the statement.
To prove the reverse inclusion, on an arrow
$\varphi\colon u_x\xrightarrow{(a,\sigma)}u_{\sigma(x)}$
put $\chi_\varepsilon(\varphi)=a(x)$.
If the arrow with label $(a,\sigma)$ is followed by an arrow with label
$(b,\rho)$, then the label of the composition is
$(b,\rho)(a,\sigma)=(b\cdot\sigma+a,\rho\sigma)$, and
$(b\cdot\sigma)(x)=b(\sigma(x))$. Hence $\chi_\varepsilon$ is additive.
The character $\chi_\varepsilon$ is locally finite, since $a$ has finite
support. Moreover, if $(a,\sigma)$ is the label of a loop at $u$, then the
value of $\chi_\varepsilon$ on this loop is $a(x_0)=\varepsilon(a,\sigma)$.
For $n=2$, consider the potential $p\colon X_2\to\mbb Z$,
$p(1,m)=-m$, $p(2,m)=m$, and on the same arrow $\varphi$ put
\[
\chi_\tau(\varphi)=p(\sigma(x))-p(x)+t_1(\sigma).
\]
The differences of the values of $p$ add up under composition, and $t_1$
is a group character. For $i=1,2$ and all sufficiently large $m$, we have
$p(\sigma(i,m))-p(i,m)=-t_1(\sigma)$.
Hence $\chi_\tau$ is locally finite. If $\sigma\in H_2$, then
$\sigma(x_0)=x_0$, and the value of $\chi_\tau$ on the loop at $u$ is
$\tau(a,\sigma)$.
Since $t_1(H_2)=\mbb Z$ by the proof of Lemma~\ref{lem:houghton-chars},
the characters $\varepsilon$ and $\tau$ are linearly independent.
Extending both characters by zero outside $\Gamma_{[u]}$, we obtain all
the asserted admissible characters. The dimensions of the quotients follow
from Proposition~\ref{prop:admissible} and
Theorems~\ref{thm:qinn} and~\ref{thm:Wn-ends}.
\end{proof}

\begin{remark}\label{rem:Wn-Hom}
For $W_n$, the inclusion
$\opn{Adm}_u(W_n)\subseteq\opn{Hom}(C_{W_n}(u),\Cx)$ is strict. A character
$\kappa$ of $C_{W_n}(u)=A_n\rtimes H_n$ is determined by the pair
$(\kappa|_{A_n},\kappa|_{H_n})$, where $\kappa|_{A_n}$ is invariant under
$H_n$, and any such pair defines a character. The group $H_n$ has two
orbits on $X_n$, namely $\{x_0\}$ and $X_n\setminus\{x_0\}$; therefore the
$H_n$-invariant characters of $A_n$ have the form
$a\mapsto\lambda_0a(x_0)+\lambda_1\sum_{x\ne x_0}a(x)$. The characters of
$H_n$ factor through the shifts (see the proof of
Lemma~\ref{lem:houghton-chars}) and form a space of dimension $n-1$. Hence
$\dim\opn{Hom}(C_{W_n}(u),\Cx)=n+1$, whereas by
Theorem~\ref{thm:Wn-admissible} the dimension of $\opn{Adm}_u(W_n)$ is $2$
for $n=2$ and $1$ for $n\geqslant3$. The character
$a\mapsto\sum_{x\ne x_0}a(x)$ and, for $n\geqslant3$, all nonzero
characters factoring through the shifts are not admissible. In the
notation of Remark~\ref{rem:LES}, the rank of the map $\partial$ is $1$
for $n=2$ and $n$ for $n\geqslant3$.
\end{remark}

\subsection{Hochschild cohomology by components}\label{sec:hochschild}
For a description of derivations of group algebras in terms of Hochschild
cohomology, see \cite{mish21}. The $0$-cochains of the Hochschild complex
with coefficients in $\CG$ are the elements of $\CG$. For a $0$-cochain
$c\in\CG$ and a $1$-cochain $c'\colon\CG\to\CG$, the differential $\delta$
is given by
\[
(\delta c)(x)=xc-cx,\qquad
(\delta c')(x,y)=xc'(y)-c'(xy)+c'(x)y.
\]
Thus the $1$-cocycles are derivations, and the coboundaries of
$0$-cochains form the space of inner derivations. Consequently,
\[
\opn{HH}^1(\CG,\CG)\cong\Der(\CG)/\Inn(\CG).
\]
For a class $[u]$, put
$C_{[u]}^0=\opn{span}_{\Cx}[u]$. For $k\geqslant1$, denote by
$C_{[u]}^k$ the space of $k$-cochains $c$ such that
\[
c(x_1,\ldots,x_k)(x_1\cdots x_k)^{-1}
\in\opn{span}_{\Cx}[u]
\qquad\text{for all }x_1,\ldots,x_k\in G.
\]
The Hochschild differential preserves this condition, since conjugation
preserves $[u]$. The degree-$1$ cohomology group of the resulting
subcomplex is denoted by $\opn{HH}_{[u]}^1(\CG,\CG)$.
For a derivation $d$, the condition $d(g)g^{-1}\in\opn{span}_{\Cx}[u]$ for
all $g\in G$ is equivalent to the condition that the character of $d$ is
supported on $\Gamma_{[u]}$. Up to sign, the coboundaries of elements of
$C_{[u]}^0$ coincide with the derivations in $\Inn(\Gamma_{[u]})$.
Therefore
\begin{equation}\label{eq:HH-component}
\opn{HH}_{[u]}^1(\CG,\CG)
\cong
\Der(\Gamma_{[u]})/\Inn(\Gamma_{[u]}),
\end{equation}
and sequence~\eqref{eq:admissible-sequence} and
formula~\eqref{eq:admissible-dimension} describe the space
$\opn{HH}_{[u]}^1(\CG,\CG)$.

\begin{remark}\label{rem:LES}
The correspondence $d\mapsto\bigl(g\mapsto d(g)g^{-1}\bigr)$ identifies
$\Der(\Gamma_{[u]})$ with the space of crossed homomorphisms
$G\to\opn{span}_{\Cx}[u]$, where $G$ acts on $\opn{span}_{\Cx}[u]$ by
conjugation, and identifies $\Inn(\Gamma_{[u]})$ with the space of
principal crossed homomorphisms. Therefore
\[
\opn{HH}_{[u]}^1(\CG,\CG)\cong H^1\bigl(G,\opn{span}_{\Cx}[u]\bigr),
\]
where $\opn{span}_{\Cx}[u]\cong\Cx[C_G(u)\backslash G]$ is the
permutation module with the left action of $G$ given by
\[
g\cdot(C_G(u)h)=C_G(u)hg^{-1}.
\]
Denote by $\Cx^{[u]}$ the module of all functions on $[u]$. This module is
coinduced from the trivial $C_G(u)$-module $\Cx$; therefore, by Shapiro's
lemma \cite[Proposition~III.6.2]{brown1982},
\[
H^1\bigl(G,\Cx^{[u]}\bigr)\cong H^1\bigl(C_G(u),\Cx\bigr)
=\opn{Hom}\bigl(C_G(u),\Cx\bigr).
\]
Suppose the class $[u]$ is infinite. The long exact sequence associated
with
\[
0\longrightarrow\opn{span}_{\Cx}[u]\longrightarrow\Cx^{[u]}
\longrightarrow\Cx^{[u]}/\opn{span}_{\Cx}[u]\longrightarrow0,
\]
together with this isomorphism, yields the exact sequence
\begin{multline*}
0\to\Cx\to H^0\bigl(G,\Cx^{[u]}/\opn{span}_{\Cx}[u]\bigr)
\to H^1\bigl(G,\opn{span}_{\Cx}[u]\bigr)\\
\to\opn{Hom}(C_G(u),\Cx)
\xrightarrow{\ \partial\ }H^1\bigl(G,\Cx^{[u]}/\opn{span}_{\Cx}[u]\bigr).
\end{multline*}
The elements of $H^0\bigl(G,\Cx^{[u]}/\opn{span}_{\Cx}[u]\bigr)$ are the
functions on $[u]$ almost invariant under conjugation, considered modulo
finitely supported functions; if the number of ends is finite, then the
dimension of this space equals $e(\opn{conj}(G)_{[u]})$ (cf. the proof of
\cite[Theorem~1]{aru23} and the description of the number of ends of a
pair in terms of almost invariant sets in~\cite{scott1977ends}). The map
to $\opn{Hom}(C_G(u),\Cx)$ coincides with the restriction of characters to
the loops at $u$. Therefore sequence~\eqref{eq:admissible-sequence} is a
fragment of this long exact sequence, $\opn{Adm}_u(G)=\ker\partial$, and
the statement of Theorem~\ref{thm:qinn} is equivalent to the equality
$\dim H^0\bigl(G,\Cx^{[u]}/\opn{span}_{\Cx}[u]\bigr)=e(\opn{conj}(G)_{[u]})$.
\end{remark}

If $G$ is finitely generated, then each derivation involves only finitely
many components, since a derivation is determined by its values on a
finite generating set. The projections of a character to the components
again define derivations, and the projections of an inner derivation are
inner. Therefore
\begin{equation}\label{eq:HH-decomposition}
\opn{HH}^1(\CG,\CG)
\cong\bigoplus_{[v]}
\opn{HH}_{[v]}^1(\CG,\CG),
\end{equation}
where the sum is taken over the conjugacy classes of $G$.
\begin{corollary}\label{cor:Wn-HH-component}
For $u=(e_{x_0},1)\in W_n$, we have
\[
\dim_{\Cx}
\opn{HH}_{[u]}^1\bigl(\Cx[W_n],\Cx[W_n]\bigr)
=
\begin{cases}
3,&n=2,\\
n,&n\geqslant3.
\end{cases}
\]
Moreover, $\opn{HH}^1\bigl(\Cx[W_n],\Cx[W_n]\bigr)$ is
infinite-dimensional for each $n\geqslant2$.
\end{corollary}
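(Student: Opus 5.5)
The first assertion follows by combining results already proved. By \eqref{eq:HH-component}, $\opn{HH}^1_{[u]}(\Cx[W_n],\Cx[W_n])\cong\Der(\Gamma_{[u]})/\Inn(\Gamma_{[u]})$. Theorem~\ref{thm:Wn-admissible} computes this quotient as $\Cx^3$ for $n=2$ and $\Cx^n$ for $n\geqslant3$. Equivalently, the class $[u]=\{u_x:x\in X_n\}$ is infinite and has $n$ ends by Theorem~\ref{thm:Wn-ends}, so formula~\eqref{eq:admissible-dimension} gives
\[
\dim=(n-1)+\dim\opn{Adm}_u(W_n),
\]
which is $(n-1)+2=3$ for $n=2$ and $(n-1)+1=n$ for $n\geqslant3$.

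For infinite-dimensionality I would use the decomposition \eqref{eq:HH-decomposition}. It suffices to find infinitely many conjugacy classes $[v]$ with $\opn{HH}^1_{[v]}\neq0$. The natural candidates are the elements $v_k=(k e_{x_0},1)$, $k\geqslant1$. These lie in pairwise distinct conjugacy classes, since conjugation acts on $A_n$ by permuting coordinates and hence preserves the multiset of nonzero values of $a$. The computations of Propositions~\ref{prop:centralizer} and~\ref{thm:Wn-ends} go through verbatim for $v_k$. Thus $C_{W_n}(v_k)=A_n\rtimes H_n$, and the diagram of $[v_k]$ is again $Y_n$ with loops, so it is infinite with $n$ ends.

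By Theorem~\ref{thm:qinn}, each such component therefore contributes $n-1\geqslant1$ dimensions of quasi-inner modulo inner derivations. Summing over $k$ gives infinite dimension. As an alternative not needing $n\geqslant2$ ends, the character $\chi_\varepsilon$ from the proof of Theorem~\ref{thm:Wn-admissible} can be transplanted to $[v_k]$ with $\chi(\varphi)=a(x)$ on arrows $v_{k,x}\xrightarrow{(a,\sigma)}v_{k,\sigma(x)}$. It has nonzero loop value, hence gives a non-quasi-inner class.

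The only point needing care is the distinctness of the classes $[v_k]$ and the fact that the direct sum \eqref{eq:HH-decomposition} really yields independence. This is immediate, since the components are supported on disjoint subgroupoids.
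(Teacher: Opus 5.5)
Your proposal is correct and follows essentially the same route as the paper: the first assertion comes from \eqref{eq:HH-component} and \eqref{eq:admissible-dimension} together with Theorems~\ref{thm:Wn-ends} and~\ref{thm:Wn-admissible}, and infinite-dimensionality comes from the pairwise distinct classes $[u^k]$. These classes share the centralizer $A_n\rtimes H_n$, so each has $n$ ends and contributes $n-1>0$ dimensions by Theorem~\ref{thm:qinn}, while your alternative of transplanting $\chi_\varepsilon$ to $[u^k]$ is also valid but not needed.
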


\begin{proof}
By Theorem~\ref{thm:Wn-ends}, the diagram of the class $[u]$ has $n$ ends.
By Theorem~\ref{thm:Wn-admissible}, the dimension of $\opn{Adm}_u(W_n)$ is
$2$ for $n=2$ and $1$ for $n\geqslant3$. The first assertion follows from
formulas~\eqref{eq:HH-component} and~\eqref{eq:admissible-dimension}.
The classes $[u^r]$, $r=1,2,\ldots$, are pairwise distinct.
For each $r$, we have
$[u^r]=\{(r e_x,1):x\in X_n\}$ and
$C_{W_n}(u^r)=C_{W_n}(u)$.
By Proposition~\ref{prop:conj-schreier}, the diagram of each class $[u^r]$
is isomorphic to the diagram of the class $[u]$; hence, by
Theorem~\ref{thm:Wn-ends}, this diagram has $n$ ends. By
Theorem~\ref{thm:qinn}, the space
$\QInn(\Gamma_{[u^r]})/\Inn(\Gamma_{[u^r]})$
has dimension $n-1>0$. Therefore infinitely many components make a nonzero
contribution to direct sum~\eqref{eq:HH-decomposition}.
\end{proof}

\printbibliography[
heading=bibintoc,
title={References}
]
\end{document}